\theoremstyle{plain}
\newtheorem{thm}{Theorem}[section]
\newtheorem{conj}[thm]{Conjecture}
\theoremstyle{definition}
\newtheorem{dfn}[thm]{Definition}
\newtheorem{ex}[thm]{Example}
\theoremstyle{remark}
\newtheorem{rmk}[thm]{Remark}
\renewcommand*{\eqref}[1]{%
  \hyperref[{#1}]{\textup{\tagform@{\ref*{#1}}}}%
}
\newcommand{\cA}{\mathcal{A}}
\newcommand{\cL}{\mathcal{L}}
\newcommand{\cO}{\mathcal{O}}
\newcommand{\cV}{\mathcal{V}}
\newcommand{\veps}{\varepsilon}
\DeclareMathOperator{\uhp}{\mathcal{H}}
\DeclareMathOperator{\rank}{rank}
\DeclareMathOperator{\Endo}{End}
\DeclareMathOperator{\im}{im}
\DeclareMathOperator{\Pic}{Pic}
\DeclareMathOperator{\GL}{GL}
\DeclareMathOperator{\SL}{SL}
\DeclareMathOperator{\PSL}{PSL}
\newcommand*{\df}{\mathrel{\vcenter{\baselineskip0.5ex \lineskiplimit0pt
                     \hbox{\scriptsize.}\hbox{\scriptsize.}}} =}
\providecommand{\abs}[1]{\left\lvert#1\right\rvert}
\providecommand{\twomat}[4]{\left(\begin{matrix}#1&#2\\#3&#4\end{matrix}\right)}
\providecommand{\stwomat}[4]{\left(\begin{smallmatrix}#1&#2\\#3&#4\end{smallmatrix}\right)}
\providecommand{\twovec}[2]{\left(\begin{matrix}#1\\#2\end{matrix}\right)}
\newcommand{\CC}{\mathbf{C}}
\newcommand{\ZZ}{\mathbf{Z}}
\newcommand{\PP}{\mathbf{P}}
\newcommand{\RR}{\mathbf{R}}
\newcommand{\cVbar}{\overline{\cV}}
\newcommand{\Ebar}{\overline{E}}
\newcommand{\floor}[1]{\left \lfloor #1 \right\rfloor}
\DeclareMathOperator{\Herm}{Herm}
\DeclareMathOperator{\bpartial}{{\bar\partial}}
\DeclareMathOperator{\btau}{{\bar\tau}}
\DeclareMathOperator{\Higgs}{Higgs}
\DeclareMathOperator{\Gr}{Gr}
\DeclareMathOperator{\res}{res}
\begin{document}
\title[Nonabelian Hodge theory and vector valued modular forms]{Nonabelian Hodge theory and vector valued modular forms}
\author{Cameron Franc and Steven Rayan}
\email{franc@math.usask.ca; rayan@math.usask.ca}
\address{Department of Mathematics \& Statistics, University of Saskatchewan, McLean Hall, 106 Wiggins Road, Saskatoon, SK, CANADA S7N 5E6}
\date{\today}
\keywords{Nonabelian Hodge correspondence; modular form; filtered flat bundle; Higgs bundle; Higgs form; tame harmonic bundle; three-term inequality; stability}

\begin{abstract}
We examine the relationship between nonabelian Hodge theory for Riemann surfaces and the theory of vector valued modular forms. In particular, we explain how one might use this relationship to prove a conjectural three-term inequality on the weights of free bases of vector valued modular forms associated to complex, finite dimensional, irreducible representations of the modular group. This conjecture is known for irreducible unitary representations and for all irreducible representations of dimension at most $12$. We prove new instances of the three-term inequality for certain nonunitary representations, corresponding to a class of maximally-decomposed variations of Hodge structure, by considering the same inequality with respect to a new type of modular form, called a ``Higgs form'', that arises naturally on the Dolbeault side of nonabelian Hodge theory. The paper concludes with a discussion of a strategy for reducing the general case of nilpotent Higgs bundles to the case under consideration in our main theorem.
\end{abstract}

\maketitle
\tableofcontents

\section{Introduction}
Suppose that $Y$ is a hyperbolic orbifold curve uniformized by the complex upper half plane $\uhp$, so that $Y$ can be identified with a quotient $Y = \Gamma \backslash \uhp$ of $\uhp$ under the action of a discrete group of linear fractional transformations $\Gamma$. Assume that $\Gamma$ is a Fuchsian group of the first kind, so that $Y$ has finite volume and a finite number of cusps. Vector bundles $\cV$ on $Y$ can be pulled back to $\uhp$ and trivialized there --- this trivialization can even be achieved holomorphically for holomorphic bundles, by the Stein property of $\uhp$. Thus, by uniformization, all vector bundles on $Y$ can be described as trivial bundles $\uhp \times \CC^n$ endowed with an action of the fundamental group $\Gamma$:
\[
  \gamma (\tau,v) = (\gamma \tau, c(\gamma,\tau)v).
\]
The function $c \colon \Gamma \times \uhp \to \GL_n(\CC)$ appearing above satisfies the cocycle identity $c(\alpha\beta,\tau) = c(\alpha,\beta\tau)c(\beta,\tau)$. The bundle-sheaf dichotomy says that giving the data of the bundle $\cV$ is equivalent to giving the data of its sheaf of local sections, which are vector valued functions on open subsets of $\uhp$ satisfying the modular transformation law
\[
  F(\gamma\tau) = c(\gamma,\tau)F(\tau).
\]
Thus, in this sense, the study of vector bundles on $Y$ is equivalent to the study of generalized automorphic functions for the fundamental group $\Gamma$.

In the last several decades geometers have been exploring the geometry of Higgs bundles and their relationship with Hodge theory, beginning with \cite{Hitchin,Simpson1,Simpson2}.  Higgs bundles originated in mathematical investigations into gauge theory, specifically the self-dual Yang-Mills equations on a Riemann surface \cite{Hitchin}, now known as the \emph{Hitchin equations}.  The rich geometry inherited by moduli spaces of Higgs bundles has been exploited in a variety of fields: surface group theory (see \cite{Labourie} for a survey); integrable systems (beginning with \cite{HitchinDuke}); mirror symmetry (\cite{HauselThaddeus,KapustinWitten,DonagiPantev2} for instance); and topological recursion and free probability (\cite{DumitrescuMulase}), to name a few areas.  Most spectacularly, Higgs bundles lie at the heart of Ng\^{o}'s proof of the Fundamental Lemma \cite{Ngo} from the Langlands Program, which is perhaps the first glimpse into a truly deep relationship between Higgs bundles and number theory.

The goal of the present paper is to discuss applications of ideas of nonabelian Hodge theory to a question concerning the weights of basis elements of spaces of (weakly holomorphic) vector valued modular forms for the modular group $\Gamma(1) \df \PSL_2(\ZZ)$.  To be more precise, let $Y = \Gamma(1) \backslash \uhp$ and let $X$ stand for its compactification by cusps. Then $X$ can be identified with the weighted projective line $\PP(2,3)$, and the splitting principle holds for holomorphic vector bundles on $X$. If $\rho \colon \Gamma(1) \to \GL_n(\CC)$ denotes a representation of the modular group, then holomorphic modular functions for $\rho$ are global sections of a logarithmic extension $\cV(\rho)$ to the cusp of the natural flat connection on $Y$ associated to $\rho$. If $\cV_{2l}(\rho) \df \cV(\rho) \otimes \cO(l)$, then global sections of $\cV_{2l}(\rho)$ are modular forms of weight $k = 2l$. Let $M_{k}(\rho)$ denote the space of global sections of $\cV_{k}(\rho)$, and define the space of \emph{holomorphic vector valued modular forms for $\rho$} to be
\[
  M(\rho) \df \bigoplus_{k \in 2\ZZ} M_{k}(\rho).
\]
The splitting principle for holomorphic bundles on $X$ implies that $M(\rho)$ is a free module over the ring of classical scalar valued modular forms of level one, which is a polynomial ring in two generators $E_4$ and $E_6$ in degrees $2$ and $3$, respectively (corresponding to Eisenstein series of weights $4$ and $6$) isomorphic with the projective coordinate ring of $\PP(2,3)$. Moreover, if $\cV(\rho)$ decomposes as
\[
  \cV(\rho) \cong \bigoplus_{j=1}^n\cO(r_j),
\]
then $M(\rho)$ has a free-basis of modular forms of weights $k_j = -2r_j$.

Computing the roots $r_j$ above, or equivalently, the weights $k_j$, is a difficult problem in general. When $\rho$ is unitarizable, one can use positivity and Serre duality to give a formula for the roots, but the nonunitarizable case is much more difficult. Note that, if one works with $\SL_2(\ZZ)$ instead of $\PSL_2(\ZZ)$, one encounters difficulties even for representations of finite image: in that case, forms of weight one are mapped to themselves under Serre duality, and the geometric argument collapses. Due to this, even to this day the dimensions of spaces of classical holomorphic congruence modular forms of weight one are poorly understood. See \cite{Duke} and \cite{MichelVenkatesh} for some asymptotic results.

In \cite{FrancMason} it was observed that for irreducible $\rho$, the weights of a free basis for $M(\rho)$ tend to satisfy a three-term inequality. To state this precisely, let $m_k$ denote the multiplicity of forms of (even) weight $k$ among a free basis for $M(\rho)$ over the ring of scalar valued modular forms. Equivalently, it is the multiplicity of $\cO(-\frac 12k)$ appearing in the decomposition of $\cV(\rho)$.
\begin{conj}
\label{3termconj}
  For irreducible $\rho$, the three-term inequality
  \[
  m_k \leq m_{k+2}+m_{k-2}
\]
is satisfied for all $k \in \ZZ$.
\end{conj}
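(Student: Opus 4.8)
The plan is to recast Conjecture~\ref{3termconj} as a statement about a splitting type and then to attack it on the Dolbeault side of nonabelian Hodge theory, where the governing operator is $\cO$-linear. Writing $\cV(\rho)\cong\bigoplus_r\cO(r)^{\oplus n_r}$, we have $m_k=n_{-k/2}$, so the conjecture is equivalent to the concavity-type bound
\[
  n_r\leq n_{r-1}+n_{r+1}\qquad(r\in\ZZ)
\]
on the multiplicities of the summands $\cO(r)$. I would first apply the tame nonabelian Hodge correspondence (Simpson, Mochizuki) to attach to irreducible $\rho$ a stable parabolic Higgs bundle $(\cE,\theta)$ of parabolic degree $0$ on $X=\PP(2,3)$, with $\theta\colon\cE\to\cE\otimes\Omega^1_X(\log\infty)$. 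The key geometric input is that $\Omega^1_X(\log\infty)\cong\cO(1)$, the weight-two bundle, and that $H^0(\cO(1))=0$ because $M_2(\Gamma(1))=0$. The advantage of this side is that $\theta$ is $\cO$-linear, so the whole structure is algebro-geometric rather than merely $\CC$-linear.

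The cleanest case is a system of Hodge bundles, i.e.\ a fixed point of the Hitchin $\CC^\times$-action and hence a complex variation of Hodge structure. When moreover the variation is maximally decomposed, each Hodge piece is a line bundle and $\cE$ is a chain $\cO(s_1)\xrightarrow{\theta_1}\cO(s_2)\otimes\cO(1)\to\cdots\to\cO(s_n)\otimes\cO(1)$, where $\theta_i$ is a section of $\cO(s_{i+1}-s_i+1)$ that is nonzero by irreducibility. Since $H^0(\cO(1))=0$, each step satisfies $s_{i+1}-s_i=-1$ or $s_{i+1}-s_i\geq 1$, and never $0$. The $\theta$-invariant subbundles are exactly the tails $\bigoplus_{i\geq j}\cO(s_i)$, so polystability forces $\sum_{i\geq j}s_i<0$ for each proper tail, equivalently (using $\sum_i s_i=0$ up to parabolic weights) that every proper initial sum $\sum_{i<j}s_i$ is positive. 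I would then prove the three-term inequality for the Dolbeault ``Higgs forms'' — the sections of the twists of $\cE$ — by a combinatorial analysis of the degree sequence $(s_i)$ driven by this sign pattern of partial sums together with the step condition.

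To reach the general conjecture I would carry out two reductions and then invoke the comparison with modular forms. First, flow along the $\CC^\times$-action $(\cE,\theta)\mapsto(\cE,t\theta)$: its limit as $t\to0$ is a polystable fixed point, hence a variation of Hodge structure, reducing an arbitrary irreducible $\rho$ to the nilpotent (Hodge-theoretic) case. Second, reduce a general nilpotent system, whose Hodge pieces may have higher rank, to the maximally decomposed case by splitting the Hodge pieces — the strategy indicated at the end of the paper. Finally, to pass from Higgs forms back to genuine modular forms one identifies the Dolbeault bundle with $\operatorname{gr}_F\cV(\rho)$ and compares the two splitting types through the Hodge filtration.

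The main obstacle is this last comparison, compounded by the flow. The splitting type is a holomorphic invariant that the transcendental harmonic-metric correspondence does not preserve, and the parabolic degrees on the Dolbeault side are in general fractional, whereas the weights $k$ are integral; so even for a fixed variation of Hodge structure, deducing the integer inequality from the Higgs-form inequality requires genuine work. Worse, the $\CC^\times$-limit is attached to a \emph{different} representation than $\rho$, and semicontinuity controls the resulting jump in the splitting type in only one direction, so transferring the three-term data of $\cV(\rho)$ from that of its fixed-point degeneration is delicate. The secondary difficulty — handling higher-rank Hodge pieces, where $\theta$ is no longer scalar and the stability bookkeeping couples several summands within a single piece — is precisely the reduction the paper leaves conjectural. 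I expect the comparison between $\rho$ and its Hodge-theoretic degeneration to be the true crux.
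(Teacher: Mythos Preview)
The statement is a \emph{conjecture}, and the paper does not prove it. What the paper establishes is the special case of maximally-decomposed variations of Hodge structure (Theorem~\ref{t:main}), together with a sketch in Section~\ref{s:application} of how one might hope to reduce the general case to this one via the $\CC^\times$-action and Morse flow on the Higgs moduli space. Your proposal is not a proof but an outline of exactly this program, and you are candid that the reductions are incomplete.

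For the case you do treat in detail --- the chain of line bundles with $\theta$ shifting by one --- your argument matches the paper's. Both use that each component of $\theta$ is a nonzero section of a twist of $\Omega^1_X(\infty)$, so the step $s_{i+1}-s_i$ is never $0$ (since $M_2(\Gamma(1))=0$) and can drop by at most one unit; both then feed this step constraint together with a stability consequence into a combinatorial count. The paper's combinatorics are more explicit than yours: it draws the piecewise-linear graph through the points $(j,r_j)$, intersects it with three horizontal lines at heights $r$, $r\pm 2$, and pairs intersection points region by region, closing the argument with the single stability input $r_n<r_1$. Your version invokes instead the positivity of all proper initial partial sums, which is a stronger use of stability than the paper actually needs.

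On the transfer between the de Rham and Dolbeault sides, you are more pessimistic than the paper. The paper asserts (end of Section~4, citing \cite{Biquard,Simpson2}) that although the holomorphic splitting type is not preserved under the harmonic-metric correspondence, the cohomology groups are, and that this suffices to show the three-term inequality holds for all filtered regular connections if and only if it holds for all filtered regular Higgs bundles. Whether or not one accepts that passage as fully justified, the genuine remaining gap is the one you and the paper both flag: the $\CC^\times$-flow sends $(E,\theta)$ to a \emph{different} polystable Higgs bundle, and there is no mechanism yet for carrying the splitting multiplicities along the flow or for reducing higher-rank Hodge pieces to the $(1,\ldots,1)$ case. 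Your proposal correctly locates the crux; it simply does not resolve it, and neither does the paper.
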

Conjecture \ref{3termconj} was proved in \cite{FrancMason} for irreducible unitary representations of $\SL_2(\ZZ)$  (not just $\PSL_2(\ZZ)$) using positivity, and it was further established for \emph{all} irreducible representations satisfying $\dim \rho \leq 12$ via computation. While Conjecture \ref{3termconj} sheds no new light on the question of modular forms of weight one (in that case, for unitary representations of $\SL_2(\ZZ)$, Conjecture \ref{3termconj} says simply that $\dim M_1(\rho) \leq \dim M_3(\rho)$, since $\dim M_{-1}(\rho) = 0$ for unitary $\rho$), it is a surprising restriction on the sorts of bases that can occur for spaces of modular forms.

It has been known since the work of Narasimhan-Sheshadri \cite{NarasimhanSeshadri} in the case of compact curves, and Mehta-Seshadri \cite{MehtaSeshadri} for punctured curves, that irreducible unitary representations of a Fuchsian group $\Gamma$ correspond to holomorphic bundles on the compactified quotient $X$ with parabolic structures at cusps. A few decades later, Hitchin \cite{Hitchin}, Donaldson \cite{Donaldson} and Simpson \cite{Simpson1,Simpson2} discovered that if one wishes to consider all irreducible representations --- not just the unitary ones --- then the right category to look at is the category of \emph{filtered} or \emph{parabolic Higgs bundles} on $X$. Such an object consists of a filtered, or parabolic, bundle $E$ on $X$ paired with an $\cO_X$-linear endomorphism
\[
  \theta \colon E \to E\otimes\Omega^1_X(S)
\]
where $S = X \setminus Y$ denotes the (possibly empty) set of cusps, and $\theta$ respects the filtered structure on $E$. For higher dimensional spaces there is an additional flatness condition, but for curves this condition is automatic.

The goal of this paper is to explain, in automorphic terms, the constructions of nonabelian Hodge theory that allow one to pass between representations and filtered Higgs bundles, and to apply them to the study of Conjecture \ref{3termconj}. In Section 2 we describe the various filtered objects that arise in the statement of nonabelian Hodge theory, keeping in mind the automorphic description of these objects. Section 3 recalls some facts on vector valued modular forms and gives a precise discussion of Conjecture \ref{3termconj}. Section 4 next describes the nonabelian Hodge correspondence for noncompact curves, following \cite{Simpson2} \footnote{Although \cite{Simpson2} only treats the case of curves, and not orbifolds, Simpson's arguments all take place on the universal cover, and so they generalize readily to the case of orbifolds required in this paper.}, and we introduce the natural space of \emph{Higgs modular forms} $\Higgs(\rho)$ that is the output of nonabelian Hodge theory. When $\Gamma = \PSL_2(\ZZ)$ we use the splitting principle to show that $\Higgs(\rho)$ is a free module of rank $\dim \rho$ over the classical ring of scalar valued modular forms for $\Gamma$, analogously to the module $M(\rho)$ discussed above. In the final Section 5 we prove Conjecture \ref{3termconj} for stable filtered Higgs bundles corresponding to certain variations of Hodge structure:
\begin{thm}
  \label{t:main}
Let $(E,\theta)$ be a stable filtered Higgs bundle corresponding to a variation of Hodge structure, such that $\theta$ is nilpotent and decomposes $E$ into a sequence of line bundles $L_i$ for which $\theta$ shifts the sequence in the following way: $$\theta:L_i\to L_{i-1}\otimes\Omega^1_X(S).$$ Then, Conjecture \ref{3termconj} holds for $E$.
\end{thm}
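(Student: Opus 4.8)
The plan is to unpack Conjecture \ref{3termconj} on the Dolbeault side and read it off from the degrees of the Hodge line bundles. Since $X = \PP(2,3)$ has $\Pic(X)$ generated by $\cO(1)$, each $L_i\cong\cO(a_i)$ for a unique $a_i\in\ZZ$, and the multiplicity $m_k$ appearing in the conjecture, applied to the splitting type of $E$, is exactly $n_d \df \#\{i : a_i = d\}$ with $d = -k/2$; the statement to prove becomes
\[
  n_d\le n_{d-1}+n_{d+1}\qquad\text{for all }d\in\ZZ.
\]
A preliminary computation pins down the relevant twist: on the modular orbifold the logarithmic cotangent bundle is $\Omega^1_X(S)\cong\cO(1)$, the line bundle whose sections are weight-two forms. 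The feature of $\PP(2,3)$ that I expect to drive the whole argument is that $H^0(X,\cO(1)) = 0$, i.e.\ there are no nonzero holomorphic modular forms of weight two.

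Next I would extract the constraints that the Higgs field places on the sequence $(a_i)$. Because $(E,\theta)$ is stable it is indecomposable, so no component $\theta_i\colon L_i\to L_{i-1}\otimes\Omega^1_X(S)$ can vanish; otherwise $\bigoplus_{j<i}L_j$ and $\bigoplus_{j\ge i}L_j$ would split $(E,\theta)$ as a direct sum of Higgs subbundles. A nonzero map $\cO(a_i)\to\cO(a_{i-1}+1)$ is a section of $\cO(a_{i-1}+1-a_i)$, so
\[
  a_{i-1}+1-a_i = \lngth\bigl(\coker\theta_i\bigr)\ge 0,
\]
and, crucially, $a_{i-1}+1-a_i\neq 1$ since $\cO(1)$ has no sections. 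Thus each step $a_i-a_{i-1}$ up the chain is either $+1$ or at most $-1$: consecutive Hodge degrees are never equal, and never increase by more than one. The polarization of the variation supplies the complementary symmetry: it identifies $L_i$ with the twisted dual of $L_{n+1-i}$ and makes $\theta$ self-adjoint, so that $a_i+a_{n+1-i}=2\mu(E)$ and the cokernel lengths are matched in pairs.

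With these constraints in hand the problem becomes combinatorial. Reading $(a_i)$ as a lattice path, the facts above together with parabolic stability applied to the invariant flags $F_k = \bigoplus_{j\le k}L_j$ (which forbids the degrees near the ends of the chain from being too large) should force the path to move by unit steps $\pm 1$, so that $n_d$ counts the visits of this path to height $d$. A handshake count then finishes the argument: grouping the steps incident to height $d$ shows that $2n_d$ equals the number of edges joining heights $d$ and $d\pm1$, up to a contribution from the two endpoints of the path; since every such edge is incident to a vertex at height $d-1$ or $d+1$ one gets $2n_d\le 2n_{d-1}+2n_{d+1}+(\text{endpoint terms})$, and integrality together with the symmetric placement of the endpoints (from $a_i+a_{n+1-i}=2\mu(E)$) yields $n_d\le n_{d-1}+n_{d+1}$ exactly.

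The step I expect to be the main obstacle is the honest use of the parabolic structure. Stability here is parabolic stability, so the slopes entering the flag computation involve not only the integers $a_i$ but also the parabolic weights at the cusp and the isotropy data at the two elliptic points of orders $2$ and $3$; for a variation of Hodge structure these weights are rigidly determined by the limiting mixed Hodge structure and the residues of $\theta$, and they must be tracked carefully to justify both the nonvanishing of each $\theta_i$ in the parabolic category and the exclusion of downward jumps of size $\ge 2$. The genuinely delicate point is the boundary case in which the path returns to its starting height, where the naive degree estimate is only strictly semistable and the precise Hodge weights alone decide stability; controlling this case is exactly where the hypothesis of a maximally-decomposed variation is used, and it is the crux on which the whole argument turns.
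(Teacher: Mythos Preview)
Your extraction of the step constraints is correct: a nonzero component $\theta_i\colon\cO(a_i)\to\cO(a_{i-1})\otimes\Omega^1_X(S)$ forces $a_i-a_{i-1}\le 1$ and $a_i\ne a_{i-1}$, since there are no holomorphic modular forms of negative weight or of weight two. The gap is the next step: stability applied to the invariant flags does \emph{not} force the remaining jumps to be $\pm 1$. Large steps in the unbounded direction are fully compatible with stability. In the paper's normalization (where $\theta$ raises the index and $\Omega^1_X(S)$ contributes a twist by $\cO(2)$), the sequence of roots $(r_1,\dots,r_5)=(0,4,2,0,-2)$ gives a stable Higgs bundle: the four component maps are nonzero sections of $\cO(6),\cO(0),\cO(0),\cO(0)$, and the $\theta$-invariant subbundles $\bigoplus_{j\ge k}\cO(r_j)$ have slopes $-2,-1,0,1$, all strictly below the total slope $4/5$; yet the first step has size $4$. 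Your handshake count rests entirely on the $\pm 1$ structure, so it does not go through. The polarization symmetry $a_i+a_{n+1-i}=2\mu(E)$ you invoke is likewise unavailable: the hypothesis is only that $(E,\theta)$ is a $\CC^\times$-fixed point with one-dimensional Hodge summands, and no self-duality is assumed.

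The paper's proof exploits the asymmetry of the step constraints directly rather than trying to eliminate it. Because descents are bounded by one unit while ascents are unbounded, the path can skip over a given height $d$ only when going up; every descent to height $d$ must come from $d+1$. This lets one pair each visit to height $d$ with a neighboring visit to $d\pm 1$ by looking at the region to its right: if the path immediately drops, pair with the adjacent point at $d-1$; if it rises (possibly skipping $d+1$), follow it until it next returns to $d$, which it must do from above through $d+1$, and pair with that $d+1$ point. This injective pairing handles all visits except the rightmost one, and a single appeal to stability (the last root lies strictly below the first) supplies the missing point at height $d-1$ or $d+1$ beyond the ends of the pairing. Your reduction to lattice-path combinatorics is the right instinct, but the correct combinatorial lemma is this bounded-descent pairing, not a handshake on a simple $\pm 1$ walk.
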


Note that the representations that correspond to $(E,\theta)$ as in Theorem \ref{t:main} are not unitary, and so Theorem \ref{t:main} represents a new class of cases for which the three-term inequality is known. The rest of Section 5 discusses ideas for using the geometry of the moduli space of Higgs bundles to extend this argument to the general case.

In the final section of the paper, we speculate on how Morse-theoretic techniques associated with the topology of the Higgs bundle moduli space, namely a natural $\mathbf C^*$-action and its flows, can be used to extend Theorem \ref{t:main} to nilpotent filtered Higgs bundles that do not arise from complex variations of Hodge structure.  This program rests on the observation that certain directions of the downward Morse-Bott flow are reorganizations of the line bundles $L_i$ in the statement of the theorem.

This note reports on a talk given at a conference in honour of Geoff Mason that took place at Sacramento State University in 2018, and we thank the organizers for the opportunity to speak on this work. This is fitting, given that Knopp and Mason initiated the general study of vector valued modular forms for any representation of $\SL_2(\ZZ)$ in the sequence of papers \cite{KnoppMason1,KnoppMason2,KnoppMason3,KnoppMason4,KnoppMason5}. The original approach of Knopp-Mason was rooted in classical techniques of analytic number theory and motivated by questions arising in the theory of vertex operator algebras.  This article aims to show that new approaches to these problems can arise by appealing to the correspondence between vector valued modular forms and Higgs modular forms.  It is our general hope that this article will bring together geometers, both differential and algebraic, and automorphic representation theorists in the pursuit of new techniques and tools for solving open problems coming from either community.

\emph{Acknowledgements}: Both authors are grateful for the support provided by NSERC Discovery Grants during the preparation of this manuscript.

\section{Filtered objects}
Let $Y$ be an orbifold curve uniformized by the complex upper half plane $\uhp$ via a map $j \colon \uhp \to Y$. Assume that the fundamental group $\Gamma\subseteq \PSL_2(\RR)$ of this cover is a Fuchsian group of the first kind, so that the compactification $X=Y\cup S$ is obtained by adding a finite, possibly empty, set of cusps $S$ to $Y$.

\subsection{Filtered representations} For each cusp $s \in S$, we let $\Gamma_s\subseteq \Gamma$ denote the stabilizer subgroup of $s$.

\begin{dfn}
  A \emph{filtered representation} of $\Gamma$ consists of a finite dimensional complex representation $\rho \colon \Gamma \to \GL(V)$ and filtrations $F_{\beta,s}V$ for each $s \in S$ indexed by real numbers $\beta$ such that the following axioms are satisfied:
  \begin{enumerate}
  \item (decreasing) $F_{\alpha,s}V \subseteq F_{\beta,s}V$ for all $\alpha \geq \beta$;
  \item (exhaustive) $\cup_{\beta} F_{\beta,s}V = V$ and $\cap_{\beta} F_{\beta,s}V = 0$ for all $s \in S$;
  \item (left continuous) for all $\beta \in \RR$, there exists $\veps > 0$ such that $F_{\beta-\veps,s}V = F_{\beta,s}V$;
  \item (monodromy invariance) $\Gamma_s \cdot F_{\beta,s}V \subseteq F_{\beta,s}V$ for all $\beta \in \RR$.
  \end{enumerate}
\end{dfn}

\begin{rmk}
In the literature on moduli of parabolic bundles, both decreasing and increasing filtrations appear.  This is a matter of convention and we have opted for increasing filtrations in order to match Simpson's choice in \cite{Simpson1}.
\end{rmk}

Note that since $V$ is finite dimensional, condition (2) implies that there exists $A\leq B$ such that $F_{A,s}V = V$ and $F_{B,s}V = 0$. If $(\rho,F)$ is a filtered representation of $\Gamma$ on a vector space $V$, then for each $s \in S$ define
\begin{align*}
  \Gr_{\beta,s}(\rho,F) &\df \frac{F_{\beta,s}V}{\bigcup_{\alpha >\beta} F_{\alpha,s}V},\\
  \Gr_s (\rho,F) &\df \bigoplus_{\beta \in \RR} \Gr_{\beta,s}(\rho,F).
\end{align*}
We call a real number $\beta$ a \emph{jump} of $(\rho,F)$ provided that $\Gr_{\beta,s}(\rho,F) \neq 0$. A jump is characterized by the property that $F_{\beta,s}V \neq F_{\beta+\veps,s}V$ for all $\veps  > 0$.

\begin{dfn}
  The \emph{degree} of a filtered representation $(\rho,F)$ is defined as
  \[
  \deg(\rho,F) \df \sum_{s \in S}\sum_{\beta \in \RR} \beta\dim_\CC(\Gr_{\beta,s}(\rho,F)).
\]
The \emph{slope} of a filtered representation $(\rho,F)$ is the quantity $\mu(\rho,F) = \deg(\rho,F)/\dim_{\CC} V$.
\end{dfn}

\begin{dfn}
  A filtered representation $(\rho,F)$ is said to be \emph{semistable} provided that for every subobject $(\rho',F') \subseteq (\rho,F)$, the inequality $\mu(\rho',F') \leq \mu(\rho,F)$ holds. If the inequality holds strictly whenever $(\rho',F')$ is nontrivial, then $(\rho,F)$ is said to be \emph{stable}.
\end{dfn}

\begin{ex}
  Let $(\rho,F)$ be a filtered representation with $\dim_\CC \rho = 1$. For each $s \in S$, there is one jump $\beta \in \RR$ and $\dim \Gr_{\beta,s}(\rho,F) = 1$. Hence $\deg(\rho,F) = \mu(\rho,F)$ is the sum of the jumps. In this case $(\rho,F)$ is stable since there are no nontrivial sub-objects to test against the stability condition.
\end{ex}

\begin{dfn}
If $s\in S$ is a cusp, then the stabilizer $\Gamma_s$ has two generators. Let $\gamma_s \in \Gamma_s$ denote the unique generator that is conjugate to a matrix $\stwomat 1w01$ where $w$ is a positive real number. The \emph{residue} $\res_s(\rho,F)$ of $(\rho,F)$ at $s$ is the endomorphism $\rho(\gamma_s)$.
\end{dfn}

\subsection{Filtered regular connections}
Closely related to the notion of a filtered representation is that of a filtered connection or filtered flat bundle. To make precise this notion, let $(\rho,F)$ denote a filtered representation and define an action of $\Gamma$ on the trivial bundle $\uhp \times V$ by setting $\gamma(\tau,v) = (\gamma \tau, \rho(\gamma)v)$. This descends to a vector bundle on $Y$ that we denote $\cV(\rho)$. Sections of $\cV$ over $U \subseteq Y$ are identified with functions $f \colon j^{-1}(U) \to \CC^d$ satisfying $f(\gamma\tau) = \rho(\gamma)f(\tau)$ for all $\gamma \in \Gamma$ and $\tau \in j^{-1}(U)$. In particular, global holomorphic sections are holomorphic modular forms of weight zero for $\rho$ that have no conditions imposed at the cusp.

Weakly holomorphic modular forms associated to $\Gamma$ are global sections of the meromorphic extension of $\cV(\rho)$ to the cusp. Since the dagger symbol commonly denotes spaces of weakly holomorphic modular forms in the literature, we will write $\cV_\dagger(\rho)$ for the meromorphic extension. It can be described concretely as follows: each stabilizer $\Gamma_s$ of a cusp $s$ has two possible generators to choose from. Let $\gamma_s$ denote the unique generator such that$$\displaystyle g^{-1}\gamma_sg = \left(\begin{array}{cc}1 & w\\ 0 & 1\end{array}\right)$$for some $g \in \PSL_2(\RR)$, where $w$ is a \emph{positive} real number. Define a new function $f_g(\tau) = f(g\tau)$ and observe that $f_g(\tau+w) = \rho(\gamma_s)f_g(\tau)$. Since the exponential is surjective, we can choose an endomorphism $L_s \in \Endo_{\CC}(V)$ such that $\rho(\gamma_s) = e^{2\pi i L_s}$. We call $L_s$ a choice of exponents for $\rho(\gamma_s)$, or a choice of exponents at $s$. For every section of $\CC \to \CC/\ZZ$, there is a unique choice of $L_s$ with eigenvalues taking values in the section. If $\tilde f_g(\tau) \df e^{-2\pi i L_s\tau}f_g(\tau)$, then it follows that $\tilde f_g(\tau+w) = \tilde f_g(\tau)$. Since $\tilde f_g$ is holomorphic, it follows that $\tilde f_g$ admits a Fourier expansion
\begin{equation}
  \label{eq:qexp}
  \tilde f_g(q_w) = \sum_{n \in \ZZ} a_nq^n_w
\end{equation}
where $a_n \in \CC^d$ and $q_w = e^{2\pi i \tau/w}$. Whether this $q_w$-expansion is meromorphic is indepdent of the choice of $L_s$. One sees this by describing all possible choices of $L_s$, typically by using the Jordan canonical form of $\rho(\gamma_s)$, and comparing the results for the different choices.

A section of $\cV(\rho)$ in a punctured neighbourhood of the cusp $s$ extends to a section of $\cV_\dagger(\rho)$ at $s$ if and only if the $q_w$-expansion in \eqref{eq:qexp} is meromorphic. If $L = (L_s)_{s \in S}$ denotes a choice of exponents for each cusp, then let $\cV(\rho,L) \subseteq\cV_\dagger(\rho)$ denote the vector bundle on $X$ isomorphic to $\cV(\rho)$ on $Y$, and such that a section of $\cV_\dagger(\rho)$ at $s$ is contained in $\cV(\rho,L)$ if and only if the $q_w$-expansion \eqref{eq:qexp} obtained using the exponents $L$ is in fact holomorphic. Thus, for every choice of exponents $L$, we obtain inclusions of sheaves $\cV(\rho,L) \subseteq \cV_\dagger(\rho)$. These bundles are often called \emph{lattices} in the meromorphic extension $\cV_\dagger(\rho)$.

The meromorphic extension is naturally endowed with a connection
\[
  \nabla \colon \cV_\dagger(\rho) \to \cV_\dagger(\rho)\otimes \Omega^1_X(S) \cong \cV_\dagger(\rho) \otimes \Omega^1_X,
\]
where $\Omega^1_X(S)$ denotes the sheaf of regular differentials on $Y$ with at worst simple poles at the cusps. Viewing sections of $\cV_\dagger(\rho)$ as functions $f \colon \uhp \to V$, the holomorphic connection is described simply by $\nabla f =\frac{df}{d\tau}\otimes d\tau$. This connection keeps stable every lattice $\cV(\rho,L)$ and restricts to a regular connection
\[
  \nabla \colon \cV(\rho,L) \to \cV(\rho,L)\otimes \Omega^1_X(S)
\]
for every choice of exponents $L$.

The connection matrix at a cusp can be described explicitly as follows: for simplicity assume that the cusp is $\infty$ and it is of width $w > 0$ as above. For large enough $N$, the open set
\[
  U = \{x+iy \in \uhp \mid y > N,~ 0 < x < w\}
\]
satisfies $\gamma U \cap U = \emptyset$ for all $\gamma \in \Gamma$. Hence each vector $v \in V$ extends uniquely to a locally constant multivalued section $c_v$ of $\cV(\rho,L)$ over $j(U)$. By varying $v$ over a basis for $V$, we obtain a single valued frame
\[
  \tilde c_v(\tau) = e^{-2\pi i L_s\tau}c_v(\tau)
\]
for the fiber of $\cV(\rho,L)$ over the cusp $s$. Since $c_v$ is locally constant, it follows that
\[
  \nabla(\tilde c_v(\tau)) = -2\pi i L_s\tilde c_v(\tau).
\]
Thus, the connection matrix at the cusp $s$ is $-2\pi i L_s$. The \emph{residue} of a regular connection at a cusp $s$ is the connection matrix divided by $2\pi i$. Therefore,
\[\res_s(\cV(\rho,L),\nabla) = -L_s.\]

So far we have not incorporated the filtration $F$ of the filtered representation $(\rho,F)$ into our discussion of regular connections. Our next goal is to introduce a corresponding filtered object $(\cV(\rho,F),G)$, where for each $s \in S$ and real number $\alpha$, the bundle $G_{\alpha,s}\cV(\rho,F) \subseteq \cV_\dagger(\rho)$ is an extension of the bundle $\cV(\rho)$ to $X$. The stalk of sections of $G_{\alpha,s}\cV(\rho,F)$ over $s$ is generated by all sections of the following type: for every $v \in F_{\beta,s} V$, and for every choice of exponents $L_s$ satisfying $\rho(\gamma_s) = e^{2\pi i L_s}$ and such that the real parts $r$ of the eigenvalues of $L_s$ all satisfy $r <\beta-\alpha$, then $e^{-2\pi i L_s\tau}c_v$ defines a local section of $G_{\alpha,s}\cV(\rho,F)$ over $s$. 

This gives an example of the following type of filtered object.
\begin{dfn}
  A \emph{filtered bundle} on $X$ consists of a holomorphic bundle $E$ on $Y$, along with filtered families of extensions $E_{\beta,s}$ of $E$ to $s \in S$ for each cusp such that
  \begin{enumerate}
  \item (decreasing) $E_{\alpha,s} \subseteq E_{\beta,s}$ for all $\alpha \geq \beta$;
  \item (left continuous) for all $\beta \in \RR$, there exists $\veps > 0$ such that $E_{\beta-\veps,s} =E_{\beta,s}$;
  \item (periodicity) if $z$ is a local coordinate vanishing to order $1$ at $s$, then $E_{\beta+1,s} = zE_{\beta}$. 
  \end{enumerate}
\end{dfn}
If $E$ is a filtered bundle on $X$, then let $\bar E$ denote the vector bundle on $X$ obtained by using the extensions $E_{0,s}$ at each cusp. The fiber $\bar E(s)$ over $s$ is a finite dimensional vector space that inherits a decreasing filtration $\bar E_\alpha(s)$ from the extensions $E_{\alpha,s}$ for $\alpha \in [0,1)$. In more down to earth terms, when $E$ arsise as $\cV(\rho)$ above, $\bar E_\alpha(s)$ is spanned by constant terms at $s$ of modular forms that are sections of $E_{\alpha,s}$. The left continuous and periodicity conditions imply that $\bar E_{1-\veps}(s) = 0$ for some $\veps >0$.
\begin{dfn}
  The \emph{filtered degree} of a filtered bundle $E$ is the quantity
  \[
  \deg(E) \df  \deg(\bar E) + \sum_{s\in S}\sum_{0 \leq \alpha < 1} \alpha \dim(\Gr_\alpha (\bar E(s)))
\]
where $\deg(\bar E)$ is the standard notion of degree for the holomorphic bundle $\bar E$ on the compact curve $X$. The (filtered) \emph{slope} of $E$ is the quantity $\mu(E) = \deg(E)/\rank(E)$.
\end{dfn}

\begin{dfn}
A \emph{filtered regular connection} on $X$ consists of a holomorphic connection $(V,\nabla)$ on $Y$ endowed with the structure of a filtered bundle $V_{\beta,s}$ such that for every $s \in S$ and $\beta \in \RR$, such that the connection extends to a regular connection
\[
  \nabla \colon V_{\beta,s} \to V_{\beta,s} \otimes \Omega^1_X(s).
\]
The \emph{residue} $\res_s(V,\nabla)$ of $(V,\nabla)$ at $s$ is the residue of $(V_{0,s},\nabla)$ at $s$.
\end{dfn}

\begin{dfn}
  A filtered regular connection $(V,\nabla)$ is said to be \emph{semistable} provided that for every filtered regular subconnection $(U,\nabla) \subseteq (V,\nabla)$, the inequality $\mu(U,\nabla) \leq \mu(V,\nabla)$ holds. If the inequality holds without equality whenever $(U,\nabla)$ is nontrivial, then $(V,\nabla)$ is said to be \emph{stable}.
\end{dfn}

\begin{rmk}
Note that in the stability condition of a filtered regular condition, one only tests against filtered sub-objects, so that the filtered pieces must be preserved by the connection.
\end{rmk}

We now state, using the precise language and conventions adopted here, a classical theorem that forms one half of nonabelian Hodge theory.

\begin{thm}[Riemann-Hilbert correspondence]
  The association \[(\rho,F) \mapsto (\cV(\rho,F),G,\nabla)\]
  described above defines a degree-preserving equivalence between the category of filtered representations of $\Gamma$ and the category of filtered regular connections on $X$. This equivalence is compatible with direct sums, determinants, duals and tensor products. Semistable (resp. stable) objects correspond to one another under this equivalence. 
\end{thm}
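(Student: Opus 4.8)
The plan is to treat the stated correspondence as the parabolic refinement of the classical Deligne--Riemann--Hilbert equivalence on the open curve $Y$, verifying that the filtered enhancements match on both sides and that the two degree conventions agree. Since the functor $(\rho,F)\mapsto(\cV(\rho,F),G,\nabla)$ has already been built, the first task is to confirm that its output genuinely satisfies the axioms of a filtered regular connection: the decreasing and left-continuous properties of $G$ are inherited directly from those of $F$, while the periodicity $E_{\beta+1,s}=zE_{\beta,s}$ reflects the fact that replacing the exponent $L_s$ by $L_s+\id$ multiplies the local frame $e^{-2\pi i L_s\tau}c_v$ by $q_w^{-1}$; regularity of $\nabla$ on each lattice $V_{\beta,s}$ is immediate from the explicit connection matrix $-2\pi i L_s$ computed above, whose residue is $-L_s$.

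Next I would construct the quasi-inverse functor. Given a filtered regular connection $(V,\nabla)$ on $X$, restrict to $Y$ and take the monodromy of the flat sections to obtain a representation $\rho\colon\Gamma\to\GL(V)$; this is the classical correspondence between regular singular connections and local systems, realized here on the universal cover $\uhp$ as the assignment of the multivalued flat frame, exactly as in Simpson's treatment. The filtration $F_{\beta,s}$ on the fibre is then recovered by reading off, at each cusp, which flat sections extend into the lattice $V_{\alpha,s}$ after twisting by the chosen exponent, that is, by inverting the generating rule that defines the stalk of $G_{\alpha,s}$. That these two functors are mutually inverse follows by unwinding the constructions; compatibility with direct sums, duals, determinants and tensor products is then formal, since monodromies multiply, exponents add, and the generating rule for $G$ convolves the filtrations in the expected way.

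The crux of the argument is degree preservation, and this is where I expect the main difficulty to lie. On the representation side the degree is $\sum_{s}\sum_{\beta}\beta\dim_\CC\Gr_{\beta,s}(\rho,F)$, whereas on the connection side it is $\deg(\bar E)$ plus the parabolic weights lying in $[0,1)$. To reconcile these I would invoke the residue theorem for a regular connection on the compact curve $X$, which gives $\deg(\bar E)=-\sum_{s}\tr\bigl(\res_s(\bar E,\nabla)\bigr)=\sum_{s}\tr(L_s)$, the residue of the extension $V_{0,s}=\bar E$ being $-L_s$ with eigenvalue real parts normalized to a fixed unit-length window by the construction of $G_{0,s}$. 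The heart of the computation is to track how the filtration weight $\beta$ and the real part $r$ of an eigenvalue of $L_s$ combine: the lattice $G_{\alpha,s}$ contains $e^{-2\pi i L_s\tau}c_v$ precisely when $v\in F_{\beta,s}V$ and $r<\beta-\alpha$, so the bundle-filtration jump along a given flat direction sits at $\alpha=\beta-r$, whose fractional part is the parabolic weight while the integer part $\lfloor\beta-r\rfloor$ is absorbed into $\deg(\bar E)$ through the normalization of exponents. Summing the fractional parabolic weights against $\deg(\bar E)$ telescopes back to $\sum_s\sum_\beta\beta\dim\Gr_{\beta,s}$; pinning down the conventions in this bookkeeping, so that no factor of $2\pi i$ and no sign is misplaced, is the genuinely delicate step, and it is exactly the point at which Simpson's normalizations (and, in the unitary case, those of Mehta--Seshadri) are forced upon us.

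Finally, the stability statement follows formally once degree preservation and the correspondence of subobjects are in hand. A filtered subrepresentation $(\rho',F')\subseteq(\rho,F)$ corresponds under the equivalence to a filtered regular subconnection $(U,\nabla)\subseteq(V,\nabla)$ and conversely, because a $\nabla$-stable, filtration-compatible subsheaf on $X$ is the same datum as a monodromy-invariant, filtration-compatible subspace; since the equivalence preserves both degree and rank it preserves the slope $\mu$, so the defining inequalities $\mu(\rho',F')\le\mu(\rho,F)$ and $\mu(U,\nabla)\le\mu(V,\nabla)$ correspond term by term, and likewise for the strict inequalities. Hence semistable objects correspond to semistable objects and stable objects to stable ones, completing the proof.
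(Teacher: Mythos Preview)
Your proposal is correct and follows the standard line of argument for the parabolic Riemann--Hilbert correspondence. However, you should know that the paper itself gives essentially no proof: it simply cites Lemma~3.2 of Simpson's \emph{Harmonic bundles on noncompact curves} for all the statements. Your sketch is therefore considerably more detailed than what the paper offers, and it is in fact a reasonable outline of what Simpson does there---the construction of the quasi-inverse via monodromy, the degree computation via the residue theorem $\deg(\bar E)=-\sum_s\tr(\res_s\nabla)$, and the bookkeeping that matches the representation-side jumps $\beta$ against the bundle-side jumps $\beta-r$ (with fractional part landing in the parabolic weights and integer part absorbed into $\deg\bar E$). The one-dimensional example worked out in the paper immediately after the theorem statement confirms exactly this mechanism, and your telescoping description captures it correctly.

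One small caution: your phrase ``eigenvalue real parts normalized to a fixed unit-length window by the construction of $G_{0,s}$'' is slightly misleading, since the stalk of $G_{0,s}$ is defined as a union over \emph{all} admissible exponents $L_s$ with $r<\beta$, not a single normalized one; the effective exponent that realizes $\bar E$ as a lattice is the one whose real parts lie in the appropriate translate of $[0,1)$ determined by the jumps of $F$, and it is this choice that makes the residue--degree identity go through. This is a matter of phrasing rather than a gap in the argument.
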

\begin{proof}
A proof of all the statements save for the residue condition can be found in Lemma 3.2 of \cite{Simpson1}.
\end{proof}
\begin{rmk}
To lighten notation we will often write $\cV(\rho,F)$ for the filtered regular connection associated to a filtered representation $(\rho,F)$. Subscripts $\cV_{s,\beta}(\rho,F)$ indicate that this is a filtered collection of bundles that extend $\cV(\rho,F)$ to the cusp $s$. An overline $\cVbar(\rho,F)$ denotes the extension of $\cV(\rho,F)$ to $X$ using $\cV_{s,0}(\rho,F)$ at each cusp. The fiber of $\cVbar(\rho,F)$ over a cusp $s$ will be denoted $\cVbar(\rho,F,s)$. Its filtered pieces will be denoted as $\cVbar(\rho,F,s)_\beta$ to distinguish this filtration of a finite dimensional vector space from the filtration $\cV_{s,\beta}(\rho,F)$ by sheaves.
\end{rmk}

\begin{ex}
  Let $\Gamma = \PSL_2(\ZZ)$, for which the group of characters is cyclic of order $6$ and generated by $\chi \colon \Gamma \to \CC^\times$ satisfying $\chi(T) = e^{2\pi i/6}$, $\chi(S) = -1$, $\chi(R) = e^{4\pi i/3}$. For $a = 0,\ldots,5$, the choices of logarithm for $\chi^a$ are the rational numbers $\frac{a}{6}+n$ for $n \in \ZZ$. A filtered character consists of a pair $(\chi^a,b)$ where $b \in \RR$ is the jump of the filtration of $\chi^a$. The filtered degree $\deg(\chi^a,b)$ is equal to $b$. The residue is $\res_{\infty}(\chi^a,b) = e^{2\pi ia/6}$.

  Next we describe the corresponding filtered regular connection. The bundle on $Y$ underlying $\cV(\chi^a,b)$ is the usual bundle associated to $\chi^a$. For $\alpha \in \RR$, for every $\beta \leq b$ and for every choice of logarithm $\frac a6+n < \beta-\alpha$, if $c_v$ denotes the locally constant section associated to a basis $v \in V$ for the one-dimensional complex vector space $V$, then
  \[
  e^{-2\pi i(\frac a6+n)\tau}c_v(\tau) =q^{-n}e^{-2\pi i\tau/6}c_v(\tau) 
\]
defines a local section of $\cV_{\alpha,\infty}(\chi^a,b)$. Therefore, if $\cV(\chi^a)$ denotes the canonical extension of $\chi^a$ defined by the logarithm $a/6$, then sections of $\cV_{\alpha,\infty}(\chi^a,b)$ are sections of $\cV(\chi^a)$ with a pole of order $n < b-\frac a6-\alpha$ at the cusp. It follows that
\[
  \cV_{\alpha,\infty}(\chi^a,b) \cong \cV(\chi^a,a/6) \otimes \cO_X\left(\floor{b - \frac a6 -\alpha}\infty\right) 
\]
where $\cO_X(m\infty)$ denotes the sheaf of holomorphic functions on $Y$ with a pole of order at most $m$ at the cusp (meaning that the functions vanish at the cusp if $m < 0$), and $\cV(\chi^a,a/6)$ denotes the canonical extension of $\cV(\chi^a)$ to the cusp using the exponent $a/6$. The jumps occur at the values of $\alpha \in  b-\frac{a}{6}+\ZZ$. Therefore, the jump in $[0,1)$ occurs at $\{b-(a/6)\}$. To compute the filtered degree of $\cV(\chi^a,b)$ we now need to compute the \emph{unfiltered} degree of $\deg \cV_{0,\infty}(\chi^a,b)$:
\begin{align*}
  \deg \cV_{0,\infty}(\chi^a,b) &= \deg\cV(\chi^a,a/6) \otimes \cO_X\left(\floor{b - \frac a6}\infty\right)\\
                                &= (a/6)+\floor{b - (a/6)}\\
                                &= (a/6)+(b - (a/6)-\{b-(a/6)\})\\
  &= b-\{b-(a/6)\}.
\end{align*}
This shows that the filtered degree of $\cV(\chi^a,b)$ is also $b$.
\end{ex}

\subsection{Filtered Higgs bundles}
Filtered Higgs bundles provide a third collection of filtered objects, whose relationship to filtered connections brings about the other half of nonabelian Hodge theory. While the connection between filtered local systems and filtered regular connections is relatively classical and has long been a part of the study of modular forms, the study of modular forms through Higgs bundles is a more recent development. 
\begin{dfn}
A \emph{filtered regular Higgs bundle} on $X$ consists of a pair $(E,\theta)$ where $E$ is a filtered bundle on $X$, and $\theta$ is a collection of $\cO_X$-linear maps
\[
  \theta \colon E_{\beta,s} \to E_{\beta,s} \otimes \Omega^1_X(s).
\]
The degree of a filtered regular Higgs bundles is the degree of the underlying filtered bundle on $X$. The \emph{residue} of $(E,\theta)$ at $s \in S$ is the operator $\theta\left(q\frac{d}{dq}\right)$ acting on the fiber of $E_{0,s}$ over $s$, where $q$ is a local parameter at $s$.
\end{dfn}
\begin{dfn}
  A filtered regular Higgs bundle $(E,\theta)$ is said to be \emph{semistable} provided that for every filtered subbundle $E'\subseteq E$ satisfying $\theta(E') \subseteq E'$, one has $\mu(E',\theta) \leq \mu(E,\theta)$. If the inequaltiy is strict whenever $(E',\theta)$ is a nontrivial subobject, then $(E,\theta)$ is said to be \emph{stable}.
\end{dfn}

As will be discussed below, nonabelian Hodge theory extends the Riemann-Hilbert correspondence to a triangle of equivalences including the category of filtered Higgs bundles.

\section{Vector valued modular forms}
\subsection{Definition} For all even integers $k$, let $\cL_k$ denote the line bundle on $X$ that pulls back to $\uhp$ as the trivial bundle $j^*\cL_k = \uhp \times \CC$ endowed with the action $\gamma(\tau,v) = (\tau, (c\tau+d)^kv)$ for $\gamma = \stwomat abcd \in \Gamma$. The extension of this bundle to the cusps is the canonical one, so that global sections of $\cL_k$ are the usual scalar valued holomorphic modular forms for $\Gamma$ of weight $k$.

Let $\rho \colon \Gamma \to \GL(V)$ denote a complex finite dimensional representation of a Fuchsian group $\Gamma \subseteq \PSL_2(\ZZ)$. A weakly holomorphic modular form for $\rho$ of weight $k$ is a global holomorphic section of the bundle $\cV_\dagger(\rho)\otimes \cL_k$. That is, such an object is a holomorphic function $f \colon \uhp \to V$ such that
\[
  f(\gamma \tau) = (c\tau+d)^k\rho(\gamma)f(\tau)
\]
for all $\gamma =\stwomat abcd \in \Gamma$, and which is \emph{meromorphic at the cusps} in the following sense: for each cusp $s$, choose $\gamma_s \in \Gamma$ stabilizing $s$ such that $g\gamma_sg^{-1} = \stwomat 1w01$ for some $w > 0$ and $g \in \PSL_2(\RR)$. Then for any choice of logarithm $L_s$ matrix satisfying $\rho(\gamma_s) = e^{2\pi i L_s}$, the function $e^{-2\pi i L_s\tau}f(\tau)$ has a meromorphic $q_r = e^{2\pi i\tau/r}$ expansion.

One often wants to impose bounds on the poles at cusps (or even holomorphy or vanishing), and this is achieved through the mechanism of fixing exponent matrices $L_s$ at each cusp. This defines a holomorphic bundle $\cV(\rho,L)$ on the compact curve $X$ as discussed in the previous section. A global section $f$ of $\cV_{k}(\rho,L) \df\cV(\rho,L)\otimes \cL_k$ is a section of $\cV_\dagger(\rho)\otimes \cL_k$ such that the function $e^{-2\pi i L_s\tau}f(\tau)$ has a \emph{holomorphic} $q_r = e^{2\pi i\tau/r}$ expansion at each cusp. We write
\[
  M_{k}(\rho,L) \df H^0(X,\cV_k(\rho,L))
\]
for the corresponding (finite dimensional) space of modular forms. The usual holomorphic modular forms correspond to choosing logarithms $L_s$ with real parts of their eigenvalues inside $[0,1)$, the \emph{canonical choice} of exponents. Cusp forms correspond to the interval $(0,1]$, and in general it is advantageous to allow arbitrary exponents. We will see some justification for this below, but one might also mention functorial linear algebraic constructions such as tensor products, symmetric powers, induction, et cetera. These functorial constructions lead naturally to non-canonical exponents --- see \cite{FrancMason2} for examples.

\subsection{The three-term inequality}
The authors began to think about possible applications of nonabelian Hodge theory to vector valued modular forms when discussing an open question concerning the case of $\Gamma = \PSL_2(\ZZ)$. In this case $X$ can be identified with the weighted projective line $\PP(2,3)$. Every line bundle on $\PP(2,3)$ is isomorphic with $\cO(k)$ for an even integer $k$\footnote{Since we are working with $\PSL_2(\ZZ)$ as opposed to $\SL_2(\ZZ)$, all weights should be even. Hence $\cO(1)$ is not defined on $X$, but rather, it is a line bundle on a $\mu_2$-gerbe over $X$. We will only work with even weights in this paper.}. For example, one has $\cL_k \cong \cO(k)$. Moreover every holomorphic bundle on $\PP(2,3)$ decomposes into a direct sum of line bundles, and this decomposition is unique up to isomorphism. For details on these facts one can consult \cite{CandeloriFranc}.

In this simplified case a choice of logarithms for a representation $\rho$ is a single matrix $L$ satisfying $\rho\stwomat 1101 = e^{2\pi i L}$, and the corresponding module
\[
  M(\rho,L) = \bigoplus_{k \in \ZZ} M_{k}(\rho,L)
\]
of modular forms is known to be free of rank $d = \dim \rho$ over the ring $\CC[E_4,E_6] \cong \bigoplus_{k \in \ZZ} H^0(X,\cL_k)$ of holomorphic scalar valued modular forms of level one. If $d = \dim \rho$, one can always choose a basis of forms of weights $k_1,k_2,\ldots, k_d$ for $M(\rho,L)$ over $\CC[E_4,E_6]$, where
\[
  \cV_0(\rho,L) \cong \bigoplus_{j=1}^d \cL_{-k_j}.
\]
Let $m_k$ denote the multiplicity of $\cL_{-k}$ occuring in the decomposition above. In \cite{FrancMason} it was observed that these weight multiplicities satisfy the inequalities
\begin{equation}
\label{eq:threeterm}
  m_k \leq m_{k+2}+m_{k-2}
\end{equation}
for irreducible representations $\rho$ of dimension at most $12$. Moreover in the case that $\rho$ is irreducible and \emph{unitary} of arbitrarily large dimension, \cite{FrancMason} proved that this three-term inequality holds. We expect this inequality to hold for all irreducible representations regardless of unitarity hypotheses. Since the nonabelian Hodge correspondence arose from considerations of extending classical results of Narasimhan-Seshadri \cite{NarasimhanSeshadri} (in the compact case) and Mehta-Seshadri \cite{MehtaSeshadri} (in the noncompact case) from unitary bundles to all holomorphic bundles, it is natural to suggest that the study of modular forms associated to nonunitary representations could profit from contact with nonabelian Hodge theory. In particular, can the three-term inequality \eqref{eq:threeterm} be verified for all irreducible representations of $\PSL_2(\ZZ)$ by exploiting the rich geometry of the moduli space of filtered Higgs bundles by, say, deforming to the unitary case or to complex variations of Hodge structure? We begin a discussion of this question here and provide some partial answers.

\section{Nonabelian Hodge theory}
\subsection{The nonabelian Hodge correspondence}
Here, we review the parts of the full nonabelian Hodge theorem that we need for the current investigation.  For a broader survey on the nonabelian Hodge correspondence, we refer the reader to either of \cite{DonagiPantev,RabosoRayan}.  To begin, we let $\rho \colon \Gamma \to \GL(V)$ denote a representation of a Fuchsian group $\Gamma$. The nonabelian Hodge correspondence is mediated by the existence of a Hermitian metric on $\cV(\rho)$ satisfying a nonlinear differential equation, and with prescribed growth conditions at cusps. In order to describe this in concrete automorphic terms, let $\Herm_n$ denote the space of $n\times n$ positive definite Hermitian matrices, and let $K \colon \uhp \to \Herm_n$ denote a smooth map. Define a Hermitian metric on $\cV(\rho)$ by setting
\[
  \langle u,v\rangle_\tau \df u^T\cdot K(\tau) \cdot \bar v.
\]
In order for this to descend to a well-defined metric on $\cV(\rho)$ over the open curve $Y = \Gamma \backslash \uhp$, it is necessary and sufficient that $\langle u,v\rangle_{\tau} = \langle \rho(\gamma)u,\rho(\gamma)v\rangle_{\gamma \tau}$ for all $\gamma \in \Gamma$. That is, $K$ must satisfy the transformation law
\[
  K(\tau) = \rho(\gamma)^T\cdot K(\gamma \tau) \cdot \overline{\rho(\gamma)}
\]
for all $\tau \in \uhp$ and $\gamma \in \Gamma$. This means that $K$ is a matrix valued smooth modular form of weight zero taking values in $\Herm_n$. We have not yet imposed any conditions of slow growth at the cusps.

Let $\cA^1_Y$ denote the sheaf of smooth differential one forms on $Y$. Writing also $\cV(\rho)$ for the smooth sheaf associated to the vector bundle $\cV(\rho)$, then coordinatewise exterior differentiation defines a map of sheaves
\[
  d\colon \cV(\rho) \to \cV(\rho)\otimes \cA^1_Y.
\]
In fact, $d$ is a flat connection since the exterior derivative is flat. The decomposition $\cA^1_{Y} = \cA^{1,0}_{Y}\oplus \cA^{0,1}_Y$ yields a decomposition $d = \partial + \bpartial$ into holomorphic and antiholomorphic components
\begin{align*}
\partial &\colon \cV(\rho) \to \cV(\rho)\otimes \cA^{1,0}_Y, & \bpartial &\colon \cV(\rho) \to \cV(\rho)\otimes \cA^{0,1}_Y. 
\end{align*}
To be explicit, if a section $f$ of $\cV(\rho)$ has coordinate functions $f_j$, then
\begin{align*}
  \partial \left(\begin{matrix}
      f_1(\tau)\\
      f_2(\tau)\\
      \vdots\\
      f_n(\tau)
    \end{matrix}\right)
& = \left(\begin{matrix}
    (\partial f_1)(\tau)\\
    (\partial f_2)(\tau)\\
    \vdots\\
    (\partial f_n)(\tau)
  \end{matrix}\right)d\tau, &
                              \bpartial \left(\begin{matrix}
      f_1(\tau)\\
      f_2(\tau)\\
      \vdots\\
      f_n(\tau)
    \end{matrix}\right)
& = \left(\begin{matrix}
    (\bpartial f_1)(\tau)\\
    (\bpartial f_2)(\tau)\\
    \vdots\\
    (\bpartial f_n)(\tau)
    \end{matrix}\right)d\btau. 
\end{align*}
The bundle $\cV(\rho)$ has a natural holomorphic structure given by $\bpartial$ such that the holomorphic sections for this structure are holomorphic functions $f\colon \uhp \to \CC^n$ in the usual sense.

If $K$ is a Hermitian metric on $\cV(\rho)$, then a smooth connection $D$ on $\cV(\rho)$ is said to be a \emph{metric connection} provided that
\[
  d(K(u,v)) = K(Du,v) +K(u,Dv)
\]
for all sections $u$, $v$ of $\cV(\rho)$. The constructions of nonabelian Hodge theory rely on choosing a $(1,0)$-connection $\delta_K'$ and a $(0,1)$-connection $\delta_K''$ such that $\partial + \delta_K''$ and $\bar\partial + \delta_K'$ are both metric connections relative to $K$. On a curve these operators can be expressed in terms of matrices as $\delta_K' = \partial + Md\tau$ and $\delta_K'' = \bpartial + Nd\btau$, where $M$ and $N$ are two matrices of smooth functions on $Y$. By examining the $(1,0)$-component, or equivalently, the $(0,1)$-component, of the metric connection condition for each of these derivations, one finds the identities:
\begin{align*}
\delta_K' &= \partial + \partial\log \bar K d\tau,\\
\delta_K'' &= \bpartial + \bpartial\log \bar Kd\btau.
\end{align*}
Above, if $G$ is an invertible matrix of functions and $D$ is a derivation, we write $D\log G \df G^{-1}D(G)$ where $D(G)$ denotes the matrix obtained from $G$ by applying $D$ entrywise. 

Using these operators, we can describe the rest of the constructions of nonabelian Hodge theory. They involve the following list of operators:
\begin{align*}
\partial_K &= \partial + \frac 12 \partial \log \bar K d\tau, & \bpartial_K &= \bpartial + \frac 12 \bpartial \log \bar K d\btau,\\
\theta_K &= -\frac 12\partial \log \bar K d\tau, &\bar\theta_K &= -\frac 12 \bpartial \log \bar K d\btau.
\end{align*}
For a general choice of $K$, these operators do not need to have desirable properties.  In particular, the operators $D_K' = \partial_K + \bar\theta_K$ and $D_K'' = \bar\partial_K + \theta_K$ will not be flat in general. But it turns out that if at least one of them is flat, then both are, and they allow one to pass back and forth between flat regular connections and regular Higgs bundles.

Recall that a Higgs bundle on $Y$ consists of a triple $(E, \delta,\theta)$ where $E$ is a smooth bundle, $\delta \colon E \to E \otimes \cA^{0,1}_Y$ is a holomorphic structure on $E$, and $\theta \colon E \to E \otimes \Omega^1_Y$ is holomorphic for the complex structure defined by $\delta$. If $E$ is the smooth bundle underlying $\cV(\rho)$, whenever $(D_K'')^2 = 0$, then $K$ is called a \emph{harmonic metric} for $\rho$, and the triple $(E,\bpartial_K,\theta_K)$ is the Higgs bundle associated to $\rho$ via nonabelian Hodge theory. While the two bundles associated to one another via nonabelian Hodge theory have the same underlying smooth bundle, in general their complex structures will be different. They will be equal only when $\partial \log \bar K = 0$, as is clear from the equations above.

The succinct condition $(D_K'')^2 = 0$ can be made more explicit. Since $Y$ is a curve, a straightforward computation shows that this condition simplifies to 
\[
  \bpartial\partial \log \bar K = \frac 12 [\partial \log \bar K,\bpartial \log \bar K]
\]
or equivalently $\partial\bpartial \log K = \frac 12[\bpartial \log K,\partial \log K]$. Observe that if $K$ is a metric on a line bundle, then the commutator vanishes and this simply says that $K$ is log harmonic in the sense of Definition 4.1.7 of \cite{GoldmanXia}. We thus obtain the following automorphic description of a harmonic metric on a uniformized flat regular connection.
\begin{dfn}
Let $\rho \colon \Gamma \to \GL_d(\CC)$ denote a representation of a Fuchsian group $\Gamma$. Then a \emph{harmonic metric}  for $\rho$ is a smooth function $K \colon \uhp \to \Herm_d$ valued in positive definite Hermitian matrices, and such that the following two conditions are satisfied:
\begin{enumerate}
\item for all $\gamma \in \Gamma$ one has
\[
  K(\gamma\tau) = \rho(\gamma)^{-T}\cdot K(\tau) \cdot \overline{\rho(\gamma)}^{-1};
\]
\item the harmonic equation is satisfied:
\[
\partial \bpartial \log(K) = \frac 12[\bpartial \log(K),\partial \log(K)].
\]
\end{enumerate}
\end{dfn}

Note that we have not yet imposed any conditions at the cusps.

\begin{ex}
  The simplest examples are unitary representations $\rho$. In this case the constant identity map $K(\tau) = I$ is a harmonic metric, the complex structure is the standard one under a uniformization, and the corresponding Higgs field is the zero endomorphism. This case gives rise to the usual Petersson scalar product in the theory of modular forms. Thus, one can regard harmonic metrics as generalizations of the Petersson product to nonunitary representations.

  More generally, if $\Gamma$ has cusps and $\rho$ is endowed with filtrations at the cusps, then growth conditions must be imposed at the cusps. Even for unitary representations $\rho$, the harmonic metric with appropriate growth conditions will differ from the Petersson scalar product if the filtration is nontrivial. We will discuss this below.
\end{ex}

\begin{ex}
If $K$ is a harmonic metric for $\rho$, and if $\chi$ is a unitary one-dimensional representation of $\Gamma$, then $K$ is also a harmonic metric for $\rho \otimes \chi$.
\end{ex}

\begin{ex}
  \label{ex:totallygeodesic}
A more interesting class of examples is provided by the \emph{totally geodesic} metrics. Such examples arise as follows: a metric $K \colon \uhp \to \Herm_n$ is harmonic if and only if the trace of the second fundamental form of $K$ vanishes (see Theorem 3.3.3 of \cite{BairdWood} for details). If the second fundamental form itself vanishes, then $K$ is a totally geodesic map. This means that any geodesic in $\Herm_n$ tangent to the image of $K$ at some point is in fact entirely contained in the image of $K$. By Proposition I.4.12 of \cite{BorelJi}, all such (irreducible) totally geodesic metrics $K$ arise from injective irreducible representations $\alpha \colon \SL_2(\RR) \to \GL_n(\CC)$ making the following diagram commute:
\[
\xymatrix{
\SL_2(\RR)\ar[r]^{\alpha}\ar[d]_{a \mapsto ai} & \GL_n(\CC)\ar[d]^{g \mapsto g\overline g^T}\\
\uhp \ar[r]_K& \Herm_n
} 
\]

This construction can be used to obtain the harmonic metric for the inclusion representation $\rho \colon \Gamma \to \GL_2(\CC)$ of a Fuchsian subgroup $\Gamma$ of $\SL_2(\RR)$. Let $\rho^* \colon \SL_2(\RR) \to \GL_n(\CC)$ denote the dual of the inclusion, so that $\rho^*(a) = a^{-T}$. The resulting totally geodesic metric $K$ can be described, as usual, by lifting $\tau = x+iy \in \uhp$ to the matrix $a_\tau=\stwomat{y^{1/2}}{xy^{-1/2}}{0}{y^{-1/2}} \in \SL_2(\RR)$ so that
\[
  K(\tau) = \rho^*(a_\tau)\overline{\rho^*(a_\tau)}^T = \frac{1}{y}\twomat{1}{-x}{-x}{x^2+y^2}.
\]
This is the harmonic metric for the inclusion representation of a Fuchsian subgroup of $\SL_2(\RR)$. The Higgs bundle corresponding to the inclusion representation under the nonabelian Hodge correspondence thus has a complex structure and corresponding Higgs field defined by the operators:
\begin{align*}
  \bpartial_K &= \bpartial + \frac{1}{(\tau-\btau)^2}\twomat{\tau}{-\tau^2}{1}{-\tau}d\btau, & \theta_K &= \frac{1}{(\tau-\btau)^2}\twomat{-\btau}{\btau^2}{-1}{\btau}d\tau.
\end{align*}
Observe that the inclusion representation is not unitarizable, $\bpartial_K$ does not yield the standard complex structure on the uniformized bundle, and the Higgs field $\theta_K$ is nonzero.

Totally geodesic metrics have appeared in the literature on automorphic forms -- see for example \cite{Harris1}, \cite{Harris2}, \cite{Harris3}, \cite{EsnaultHarris}.
\end{ex}

\begin{rmk}
Since we have insisted that Fuchsian groups are contained in $\PSL_2(\RR)$, in the previous Example \ref{ex:totallygeodesic} we should have twisted the inclusion of the subgroup  $\Gamma \subseteq \SL_2(\RR)$ by a unitary character to be consistent with this hypothesis. However, the harmonic metric is the same in both cases, and so in that example we worked inside $\SL_2(\RR)$ to avoid introducing an unnecessary twist that would have complicated matters without adding clarity. More generally, the results of Simpson and this paper extend in a straightforward way to subgroups of $\SL_2(\RR)$ (or higher degree covers) and their associated gerbes over $\uhp$ by twisting with a unitary character to descend to $\PSL_2(\RR)$.
\end{rmk}

\begin{rmk}
  In general it is a difficult problem to write down exact formulas for harmonic metrics other than in the rank one case, or in the higher rank unitary or totally geodesic cases discussed above. The existence proofs discussed in \cite{Hitchin}, \cite{Donaldson}, \cite{Corlette}, \cite{Simpson1}, \cite{Simpson2}, \cite{Biquard}, and \cite{BiquardBoalch} are not constructive in nature. A natural question is to give a representation-theoretic proof of the existence of harmonic metrics that utilizes the representation theory of (possibly infinite-dimensional) Lie groups and algebras, or even more exotic structures, such as vertex operator algebras (VOAs).  Indeed, connections between Higgs bundles and VOAs have already emerged in a number of different contexts, especially in regards to the quantization of the Hitchin system, that is, of the natural integrable system structure on the mdouli space of ordinary Higgs bundles on a smooth compact curve.  See \cite{DonagiKatzSharpe,BenZviFrenkel} for instance.
  
  Such a proof concerning harmonic metrics would make the relationship between automorphic forms and nonabelian Hodge theory more transparent. For example, what is the form of the Selberg trace formula for the $L^2$-space associated to a tame harmonic metric for a nonunitary representation? In the unitary case this goes back to Selberg and Hejhal (see \cite{Hejhal1}, \cite{Hejhal2}), and more recently Muller \cite{Muller} has established a version of the Selberg trace formula for a non-selfadjoint flat Laplacian associated to nonunitary representations, in the case of cocompact groups $\Gamma$.  (See also \cite{Deitmar}.)  What more can one say if one works systematically with a harmonic metric?  Hopefully, these questions will motivate further work and interesting results down the line.

\end{rmk}

\begin{dfn}
  \label{d:tamemetric}
  Let $(\rho,F)$ denote a filtered representation of $\Gamma$. Then a \emph{tame harmonic metric} for $(\rho,F)$ is a harmonic metric $K$ for $\rho$ satisfying the following tameness condition. For every cusp $s \in S$, for every $\beta \in \RR$ and $\veps > 0$, the following holds: there exists $N >0$ and constants $c$ and $C$ such that for all $v \in F_{\beta,s}V$ but $v \not \in \bigcup_{\alpha > \beta}F_{\alpha,s}V$, if $\im(\tau) > N$ then
  \[
  c\abs{q_w}^{\beta+\veps} \leq  \abs{v}_K \leq C\abs{q_w}^{\beta-\veps}.
\]

A harmonic metric for $\rho$ is said to be \emph{tame} if  it is tame with respect to some filtered representation $(\rho,F)$.
\end{dfn}

Recall that in Definition \ref{d:tamemetric}, $q_w = e^{2\pi i \tau/w}$ is a local coordinate at the cusp $s$. The width $w> 0$ of the cusp $s$ is characterized by the fact that the stabilizer $\gamma_s\in \Gamma$ for $s$ is conjugate to $\stwomat 1w01$. The length $\abs{v}_K$ is a smooth function of $\tau \in \uhp$ equal to $\sqrt{v^TK(\tau)\bar v}$.

\begin{ex}
  Suppose that the filtration $F$ is \emph{trivial}, meaning that for each cusp $s$,
\[
  F_{\beta,s} V = \begin{cases}
    V & \beta \leq 0,\\
    0 & \beta > 0.
  \end{cases}
\]
In this case we can apply the condition of Definition \ref{d:tamemetric} to all basis vectors, with $\beta = 0$. If $K_{ij}(\tau)$ is the $(i,j)$-entry of $K$, then tameness is seen to be equivalent to the inequality
\[
  c\abs{q_w}^\veps \leq K_{ij}(g\tau) \leq C\abs{q_w}^{-\veps}
\]
for all $\tau$ with $\im(g\tau)$ large enough. So for example, if the inclusion representation is endowed with the trivial filtration, then the totally geodesic metric from Example \ref{ex:totallygeodesic} is seen to be tame for this filtration.
\end{ex}

In \cite{Simpson1}, Simpson usually expresses tameness of a harmonic metric in terms of the associated filtered regular Higgs bundle. Above we have expressed it in terms of the filtered local system, as this is the object that is most closely tied to the classical theory of modular forms. One of the key results in \cite{Simpson1} is the equivalence of these definitions of tameness. Tameness of a harmonic metric can be described intrinsically without reference to a filtered local system, a filtered regular connection, or a filtered regular Higgs bundle. Simpson introduced the notion of a \emph{harmonic bundle} to denote such an object.

\begin{dfn}
A \emph{harmonic bundle} consists of a representation $\rho \colon \Gamma \to \GL(V)$, and an associated harmonic metric $K$ that is \emph{tame} in the following sense: the metric grows at most polynomially in the Euclidean distance to each puncture, where the Euclidean distance is measured as usual in terms of a local coodinate $q_w$ at each cusp.
\end{dfn}

Tame harmonic bundles form a category, and they determine regular filtrations on the associated representations, connections and Higgs bundles. A harmonic bundle $(\rho,K)$ is \emph{irreducible} if it is impossible to decompose $\rho$ and $K$ compatibly into a nontrivial direct sum. A filtered object on $X$ will be said to be \emph{polystable} if it is a direct sum of stable filtered objects. The following Theorem is the main result of \cite{Simpson1}.
\begin{thm}
  Let $\Gamma \subseteq \PSL_2(\RR)$ be a Fuchsian group, $Y = \Gamma \backslash \uhp$ and let $X = Y \cup S$ be the compactification by cusps. The constructions discussed above define equivalences of categories between all four of the following categories:
  \begin{enumerate}
  \item tame harmonic bundles on $X$;
  \item polystable filtered representations of $\Gamma$;
  \item polystable filtered regular connections on $X$;
  \item polystable filtered regular Higgs bundles on $X$.
  \end{enumerate}
  Irreducible harmonic bundles correspond to irreducible filtered representations, stable filtered regular connections and stable filtered regular Higgs bundles.
\end{thm}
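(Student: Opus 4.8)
The plan is to assemble the cycle of equivalences out of two halves: the classical Riemann--Hilbert half, which is already in hand, and the transcendental Higgs half, which is mediated by the harmonic metric. First I would invoke the Riemann--Hilbert correspondence established above, a degree- and (semi)stability-preserving equivalence between filtered representations of $\Gamma$ and filtered regular connections on $X$. Since that equivalence is additive and matches stable objects, it matches their direct sums as well, so restricting to polystable objects immediately identifies categories (2) and (3). This reduces the theorem to relating the category (1) of tame harmonic bundles to the de Rham category (3) and the Dolbeault category (4), together with the claim that irreducibility of a harmonic bundle matches stability on each of the other three sides.

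For the direction \emph{out of} a tame harmonic bundle $(\rho,K)$, I would use the operators $\partial_K,\bpartial_K,\theta_K,\bar\theta_K$ constructed above. The harmonic equation is exactly the flatness of $D_K''=\bpartial_K+\theta_K$; from it one reads off simultaneously the flat connection $D=D_K'+D_K''$ (the de Rham datum of (3)) and the Higgs bundle $(\bpartial_K,\theta_K)$ (the Dolbeault datum of (4)), on the common underlying smooth bundle. The tameness condition of Definition \ref{d:tamemetric} --- the polynomial growth of $\abs{v}_K$ governed by the exponent $\beta$ --- is precisely what pins down the parabolic filtration at each cusp: the jump at which a section first appears is the weight recorded by its rate of growth in $\abs{q_w}$. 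A Chern--Weil computation with $K$ shows the resulting filtered objects have filtered degree zero; the standard subbundle slope inequality for a metric solving Hitchin's equation then forces polystability, each irreducible orthogonal summand of $(\rho,K)$ yielding a \emph{stable} object of slope zero.

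The converse --- producing a tame harmonic metric from a polystable object --- is the analytic heart of the argument and the step I expect to be the main obstacle. In the compact case this is the Hitchin--Simpson existence theorem for Hermitian--Einstein metrics on stable Higgs bundles, together with the Donaldson--Corlette theorem on the flat side, proved by a continuity or heat-flow method in which stability guarantees that the flow converges to a bounded solution of Hitchin's self-duality equation. Following the footnote after Theorem \ref{t:main}, I would run this analysis $\Gamma$-equivariantly on the universal cover $\uhp$, where the delicate new ingredient is the behaviour at the cusps: one must build an approximate solution carrying the prescribed singular model dictated by the parabolic weights, and then perturb it to an exact solution by controlling the flat Laplacian on the punctured curve weighted by those exponents. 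The serious difficulty is the boundary analysis showing that the solution is genuinely tame and that its growth rates recover \emph{exactly} the prescribed filtration, so that the construction inverts the one of the previous paragraph; this is the content of Simpson's asymptotic theory of tame harmonic bundles.

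Finally I would verify that the four constructions are functorial and mutually quasi-inverse, that the degree-zero normalization is preserved around the triangle, and that the compatibility with direct sums carries an indecomposable harmonic bundle (equivalently, an irreducible one) to a stable object on each of the three other sides. Combined with the first paragraph, this matches irreducible filtered representations with stable filtered regular connections and stable filtered regular Higgs bundles, completing the correspondence.
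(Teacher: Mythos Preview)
Your proposal is a faithful outline of Simpson's argument in \cite{Simpson1}, which is precisely what the paper invokes: the paper's own ``proof'' is a citation to page 755 and Sections 4 and 6 of \cite{Simpson1} for full-faithfulness and essential surjectivity, together with the observation that the analysis takes place on the universal cover and at the cusps, so it extends without change to groups $\Gamma$ with elliptic points. You make exactly the same orbifold observation (working $\Gamma$-equivariantly on $\uhp$), and your decomposition into the Riemann--Hilbert half and the harmonic-metric half, with the Donaldson--Corlette/Hitchin--Simpson existence theorem supplying the converse, is the standard architecture of Simpson's proof; so there is no substantive difference in approach, only in level of detail.
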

\begin{proof}
See page 755 of \cite{Simpson1}. Full-faithfullness and essential surjectivity of the various functors are established in Sections 4 and 6, respectively, of \cite{Simpson1}. Since the proofs take place locally on the universal cover $\uhp$ and at the cusps, they extend essentially without change to the case where $\Gamma$ contains elliptic points.
\end{proof}

Suppose that $(\rho,F)$ is a filtered representation with corresponding tame harmonic metric $K$. Let $(\cV,\nabla)$ and $(E,\theta)$ denote the corresponding filtered regular flat connection and filtered regular Higgs bundle, respectively. For each $s \in S$ we have defined residues for each of these filtered objects. In \cite{Simpson1}, Simpson describes how these residues are related to one another. The description involves breaking up the operator according to Jordan blocks. Then the nonabelian Hodge correspondence respects this Jordan decomposition, but it shifts both eigenvalues and jumps in a prescribed manner. The correspondence is as in the following table.

\begin{center}
  \begin{tabular}{|c|c|c|c|}
    \hline
    &$(\rho,F)$& $(\cV,\nabla)$& $(E,\theta)$\\
    \hline
    Jump &$\beta$ &$\beta+u$&$-u$\\
    \hline
    Eigenvalue &$e^{2\pi i(u+vi)}$&$-(u+vi)$&$-\frac 12(\beta+vi)$\\
    \hline
  \end{tabular}
\end{center}

Note that we have a slightly different labelling of variables than that found in \cite{Simpson1}. Ours preferences the filtered representation, while that of \cite{Simpson1} preferences the Higgs bundle. One surprising feature worth noting is that the jumps in the filtration of the representation shows up in the eigenvalues of the Higgs field. Thus, even if $\rho$ is a unitary representation, the Higgs field will be nonzero if the jumps of $(\rho,F)$ are nonzero.

\subsection{Higgs forms}
Fix now a representation $\rho$ of a Fuchsian group $\Gamma$ and a tame harmonic metric $K$ for $\rho$. The Higgs or Dolbeault side of nonabelian Hodge theory has the following automorphic interpretation: for an integer $k$, let $\cA_k(\rho)$ denote the space of smooth automorphic forms for $\rho$ of weight $k$, where we do not impose, initially, any conditions at the cusps. Such an automorphic form is a smooth function $f\colon \uhp \to \CC^n$ satisfying
\[
  f(\gamma\tau) = (c\tau+d)^k\rho(\gamma) f(\tau)
\]
for all $\gamma = \stwomat abcd \in \Gamma$. Define the subset $\Higgs_{\dagger,k}(\rho,K)\subseteq \cA_k(\rho)$ to be those forms $f \in \cA_k(\rho)$ satisfying the linear differential equation $\bpartial_Kf = 0$ and which have slow growth (relative to $K$) at the cusps. We call such an automorphic form a \emph{Higgs form} of weight $k$ for $\rho$.

Observe that $\Higgs_{\dagger,0}(\rho,K)$ is nothing but the space of global holomorphic sections of $(E,\theta) = (\cV,\bpartial_K,\theta_K)$. By abuse of notation let $\theta_K$ denote the matrix of functions appearing in the Higgs field. Then this matrix defines a linear map 
\[
  \theta_K \colon \Higgs_{\dagger,k}(\rho,K) \to \Higgs_{\dagger,k+2}(\rho,K).
\]
If we write
\[
  \Higgs_\dagger(\rho,K) = \bigoplus_{k \in \ZZ} \Higgs_{\dagger,k}(\rho,K)
\]
then this becomes a module over the ring of holomorphic scalar valued modular forms $M(\Gamma)$, and $\theta_K$ defines a graded $M(\Gamma)$-linear endomorphism of $\Higgs_\dagger(\rho,K)$. We stress that $\theta_K$ is \emph{linear}, and not a derivation.

\begin{rmk}
  In fact, since we have only imposed slow growth at cusps, $\Higgs_\dagger(\rho,K)$ even has the structure of an $M(\Gamma)[1/\Delta]$-module, where $\Delta$ is the Ramanujan $\Delta$ function, and $\theta_K$ also commutes with $\Delta^{-1}$.
\end{rmk}

\begin{ex}
Consider again Example \ref{ex:totallygeodesic}. The harmonic metric described there is the tame harmonic metric for the inclusion representation endowed with the trivial filtration. The differential equation satisfied by Higgs forms $f \in \Higgs_k(\rho,K)$ is
\[
  \bpartial(f) =\frac{1}{(\tau-\btau)^2}\twomat{-\tau}{\tau^2}{-1}{\tau}f.
\]
If we write $f=  (f_j)$ in coordinates, then this is the pair of equations
\begin{align*}
\bpartial(f_1) &= \frac{1}{(\tau-\btau)^2}(-\tau f_1+\tau^2f_2),\\
\bpartial(f_2) &= \frac{1}{(\tau-\btau)^2}(-f_1+\tau f_2).
\end{align*}
Observe that then $\bpartial(f_1-\tau f_2) = 0$, so that $g = f_1-\tau f_2$ is holomorphic in the usual sense, and $\bpartial(f_2) = -\frac{1}{(\tau-\btau)^{2}}g$. It follows that there is another holomorphic function $h$ so that $f_2(\tau) = h(\tau)-\frac{1}{\tau-\btau}g(\tau)$ and
\[
  f(\tau) = \twomat{-\frac{\btau}{\tau-\btau}}{\tau}{-\frac{1}{\tau-\btau}}{1}\twovec{g(\tau)}{h(\tau)}.
\]
The automorphy of $f$ implies that $g$ and $h$ are classical holomorphic scalar valued modular forms for $\Gamma$ of weights $k-1$ and $k+1$, respectively. Hence left multiplication by $M(\tau) = \stwomat{-\frac{\btau}{\tau-\btau}}{\tau}{-\frac{1}{\tau-\btau}}{1}$ defines an isomorphism
\[
  M(\tau) \colon  M_{\dagger,k-1}(\Gamma)\oplus M_{\dagger,k+1}(\Gamma) \stackrel{\cong}{\longrightarrow} \Higgs_{\dagger,k}(\rho,K).
\]

The matrix of $\theta_K$ in the basis given by $g$ and $h$ is the nilpotent matrix
\[M(\tau)^{-1}\theta_KM(\tau) = \twomat 0100.\]
For each $\lambda \in \CC^\times$ there is an isomorphism $a_\lambda \colon(\cV,\bpartial_K,\theta_K) \cong (\cV,\bpartial_K,\lambda \theta_K)$, and hence this Higgs bundle is fixed by the natural $\CC^\times$-action on Higgs bundles, which rescales the Higgs field. Indeed, the isomorphism is
\[
  a_\lambda = M(\tau)\twomat {\lambda}{0}{0}{1} M(\tau)^{-1} = \frac{1}{\tau-\btau}\twomat{\tau-\lambda\btau}{(\lambda-1)\tau\btau}{1-\lambda}{\lambda\tau-\btau}.
\]
\end{ex}

Since the metric $K$ is tame, it determines filtrations on all of the objects involved in the nonabelian Hodge correspondence. If $(E,\theta_K)$ is the filtered regular Higgs bundle corresponding to $(\rho,K)$, then recall that we write $\Ebar$ for the bundle on $X$ obtained by extending $E$ to each cusp using the zeroth filtered piece of $E$. Define
\[
  \Higgs_k(\rho,K) = H^0(X,\Ebar \otimes \cL_k).
\]
Since $\Ebar$ is a holomorphic bundle on a compact space, each space $\Higgs_k(\rho,K)$ is a finite dimensional vector space. The graded vector space
\[
  \Higgs(\rho,K) = \bigoplus_{k \in \ZZ} \Higgs_k(\rho,K)
\]
has the structure of a graded $M(\Gamma)$-module. As above, the Higgs field defines an $M(\Gamma)$-linear graded endomorphism of this module that increases weights by two. The reader should beware, however, that the Higgs field is the zero map in most cases of classical interest (for example, when $\rho$ is unitary and the filtration induced on $\rho$ by $K$ is trivial).
\begin{thm}
Let $\Gamma = \PSL_2(\ZZ)$, and let $Y$ and $X$ denote the corresponding open and compact curves, respectively. If $(\rho,K)$ describes an irreducible tame harmonic bundle on $X$, then the module $\Higgs(\rho,K)$ of Higgs forms is free of rank $\dim \rho$ over the ring $M(\Gamma) = \CC[E_4,E_6]$ of scalar valued holomorphic modular forms for $\rho$.
\end{thm}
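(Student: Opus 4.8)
The plan is to run, essentially verbatim, the argument recalled in the introduction that shows $M(\rho)$ is free over $\CC[E_4,E_6]$, but now with the holomorphic bundle $\cV(\rho,L)$ replaced by the bundle $\Ebar$ underlying the filtered regular Higgs bundle $(E,\theta_K)$ attached to $(\rho,K)$. The essential input is that $\Ebar$ is a genuine holomorphic vector bundle on the \emph{compact} curve $X=\PP(2,3)$, of rank $d=\dim\rho$. This is exactly what tameness of $K$ buys us: by Simpson's theory the tame harmonic metric determines a regular filtration, and the zeroth filtered piece extends $E$ to a locally free sheaf on $X$; this is the bundle $\Ebar$ appearing in the definition $\Higgs_k(\rho,K)=H^0(X,\Ebar\otimes\cL_k)$. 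I note that irreducibility of $(\rho,K)$ plays no role in the freeness assertion itself; it is retained only because irreducible harmonic bundles are the natural objects produced by the correspondence.

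First I would observe that the graded $M(\Gamma)$-module structure on $\Higgs(\rho,K)$ arising from multiplication of modular forms coincides with the natural module structure on the total-sections module $\bigoplus_k H^0(X,\Ebar\otimes\cL_k)$ over the homogeneous coordinate ring $\bigoplus_k H^0(X,\cL_k)\cong\CC[E_4,E_6]$ of $X=\PP(2,3)$, using the identification $\cL_k\cong\cO(k)$. Next I would invoke the splitting principle for $\PP(2,3)$ stated above (following \cite{CandeloriFranc}): every holomorphic vector bundle on $X$ is a direct sum of line bundles, and every line bundle is $\cO(n)$ for an even integer $n$. Thus $\Ebar\cong\bigoplus_{j=1}^{d}\cO(n_j)$, and since global sections commute with finite direct sums and with the twist by $\cL_k\cong\cO(k)$, we obtain
\[
  \Higgs(\rho,K)\;\cong\;\bigoplus_{j=1}^{d}\;\bigoplus_{k\in\ZZ}H^0\!\bigl(X,\cO(n_j+k)\bigr)
\]
as graded $\CC[E_4,E_6]$-modules.

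This reduces the problem to the rank-one case. For a single line bundle, $\bigoplus_{k}H^0(X,\cO(n+k))$ is nothing but the ring $R=\CC[E_4,E_6]$ with its grading shifted by $n$, i.e. the module $R(n)$, because $H^0(X,\cO(m))=R_m$ for every $m$. Shifting the grading does not alter the underlying $R$-module, so $R(n)$ is free of rank one, generated by any nonzero element of lowest degree. Summing over the $d$ summands then shows $\Higgs(\rho,K)$ is free of rank $d=\dim\rho$, with a free basis of Higgs forms of weights $-n_1,\dots,-n_d$.

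The main obstacle is not in this formal bookkeeping but in its one nontrivial input: that $\Ebar$ is locally free of the correct rank on the compact (orbifold) curve $X$. One must verify that the tameness hypothesis forces the filtered extension across the cusp to be a vector bundle, rather than merely a coherent sheaf, and that its rank equals $\dim\rho$ --- both being consequences of Simpson's description of tame harmonic bundles recalled above, transported to the weighted-projective setting $\PP(2,3)$. Granting this, together with the splitting principle on $\PP(2,3)$, freeness of rank $\dim\rho$ is immediate.
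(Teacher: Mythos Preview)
Your proposal is correct and follows essentially the same approach as the paper's own proof: invoke the splitting principle for holomorphic bundles on $X=\PP(2,3)$ to decompose $\Ebar$ into line bundles $\cO(n_j)$, and then observe that the total-sections module of each summand is a shifted copy of $M(\Gamma)=\CC[E_4,E_6]$, hence free of rank one. The paper's proof is a terse sketch of exactly this argument (citing \cite{CandeloriFranc} for the analogous result for $M(\rho)$), and your added remarks that tameness ensures $\Ebar$ is locally free of rank $\dim\rho$ on the compact $X$, and that irreducibility is not actually used for freeness, are accurate elaborations.
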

\begin{proof}
The proof is the same as the geometric proof of the free module theorem for vector valued modular forms from \cite{CandeloriFranc}. It uses the splitting principle for vector bundles on $X=\PP(2,3)$, and the fact that the spaces $\Higgs_k(\rho,K)$ are the global sections of the twisted bundles $\Ebar \otimes \cL_k$ on $X$. Since $\Pic(X) \cong \ZZ$ with $\cL_1\in \Pic(X)$ a generator, and $M(\Gamma) =\bigoplus_{k \in \ZZ} H^0(X,\cL_k)$, the proof follows.
\end{proof}

Now, given a tame harmonic bundle $(\rho,K)$, we obtain two spaces of modular forms: the more classical space $M(\rho,F)$ associated to the filtered representation $(\rho,F)$ obtained from $(\rho,K)$, and the space $\Higgs(\rho,K)$ of Higgs forms introduced above. Both spaces are finitely generated graded modules over the ring $M(\Gamma)$ of holomorphic modular forms for $\Gamma$. In the special case that $\Gamma = \PSL_2(\ZZ)$, both modules are free of rank $\dim \rho$ over $M(\Gamma)$. If $\rho$ is unitary and the filtration that $K$ induces on $\rho$ is trivial, then $M(\rho,F)$ and $\Higgs(\rho,K)$ are in fact \emph{equal}, not just isomorphic. There is no reason why these spaces must be isomorphic in general. Since the three-term inequality for modular forms on $\PSL_2(\ZZ)$ is a statement about the weights of a graded basis for $M(\rho,F)$, it is not a priori clear how to use nonabelian Hodge theory to study $M(\rho,F)$. In principle the module $M(\rho,F)$ of interest could look quite different from the corresponding module $\Higgs(\rho,K)$ on the Dolbeault side of the nonabelian Hodge correspondence. The solution is that while nonabelian Hodge theory need not preserve the isomorphism type of the holomorphic bundles involved in its formulation, it does preserve the isomorphism type of their associated cohomology groups (see Section 12 of \cite{Biquard} for the noncompact case, or the earlier paper \cite{Simpson2} for the compact case). It turns out that this is sufficient to show that the three-term inequality is true for all filtered regular connections if and only if it is true for all filtered regular Higgs bundles. Thus, when proving the three-term inequality, we may as well work within the category of filtered regular Higgs bundles. In the next section we describe a class of regular Higgs bundles that satisfy the three-term inequality, but which do not correspond to unitary representations of $\PSL_2(\ZZ)$.

\section{New cases of the three-term inequality for $\PSL_2(\ZZ)$}
\label{s:application}

\subsection{Maximally-decomposed variations of Hodge structure: a proof}
\label{s:newcase}

In \cite{FrancMason}, the three-term inequality for irreducible unitary representations of $\PSL_2(\ZZ)$ was established using the positivity of the corresponding regular connections. Unitary representations endowed with trivial filtrations correspond to Higgs bundles with Higgs field equal to zero. In the next Proposition we establish a new case of the conjecture for a simple type of variation of Hodge structure on $X$, in which the filtered Higgs bundle has a distinguished decomposition into an ordered list of sub-line bundles and with the Higgs field shifting the list by $-1$.

Stability of the Higgs bundle will play a key role, and it suggests that the three-term inequality should be  interpreted as a natural manifestation of stabililty.  Results possessing a similar flavour already exist in the Higgs bundle literature.  For instance, the so-called ``co-Higgs bundles'' on the ordinary projective line $\mathbf{P}^1$, whose Higgs fields take values in the anticanonical line bundle $\mathcal O(2)$ --- which, incidentally, bear a formal similarity to the objects below --- have Grothendieck splitting numbers that are forced by slope stability to satisfy a certain inequality \cite{Rayan}.  In general, one can view constructing a stack of Higgs bundles with fixed topological data (rank, degree, etc.) as an unbounded partition problem in which degrees of sub-sheaves are (possibly negative) integer parts of the total degree.  Passing to the moduli space via stability intervenes to make the problem bounded and, hence, well-posed.   For more on the combinatorics of Higgs bundles and the stability condition, see \cite[\S5]{RayanTopComb}.

\begin{thm}\label{PropMaxDecVar}
  Let $(E,\theta)$ denote a stable filtered regular Higgs bundle on $X$.  Assume that it is a complex variation of Hodge structure, such that $\bar E$, the zero-th filtered piece of $E$, decomposes into line bundles $\bar E \cong \bigoplus_{j=1}^n \cO(r_n)$ such that for each $1\leq j\leq n-1$, the Higgs field $\theta$ restricts to a map
  \[
 \theta \colon \cO(r_{j}) \to \cO(r_{j+1})\otimes \Omega^1_X(\infty) \cong \cO(r_{j+1}+2).
  \]
  If $m_r$ denotes the multiplicity of $r$ among the roots $r_j$ of $\bar E$, then
  \[
  m_r \leq m_{r-2}+m_{r+2}
\]
for all $r$.
\end{thm}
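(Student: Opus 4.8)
The plan is to reduce the statement to a combinatorial inequality about the splitting sequence $r_1,\dots,r_n$ and then prove that inequality by a level-crossing argument, invoking stability only through the two consequences $r_1>\mu$ and $r_n<\mu$, where $\mu=\tfrac1n\sum_j r_j$ is the slope. First I would record the shape of the sequence. Stability forces $(E,\theta)$ to be indecomposable, so no component $\theta_j\colon\cO(r_j)\to\cO(r_{j+1})\otimes\Omega^1_X(\infty)\cong\cO(r_{j+1}+2)$ can vanish: a vanishing $\theta_j$ would split off $\bigoplus_{i\le j}\cO(r_i)$ as a $\theta$-invariant direct summand, one of whose slopes would then be $\ge\mu$. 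Since on $X=\PP(2,3)$ one has $H^0(X,\cO(m))\neq0$ exactly for even $m\ge0$ with $m\neq2$ (there are no weight-two forms), the nonvanishing of $\theta_j$ forces $r_{j+1}\ge r_j-2$ and $r_{j+1}\neq r_j$. Thus, reading $r_1,\dots,r_n$ as a lattice path, every descending step has size exactly $-2$ while every ascending step has size $\ge+2$, and consecutive values differ. Applying stability to the proper $\theta$-invariant subbundles $\bigoplus_{j\ge2}\cO(r_j)$ and $\cO(r_n)$ yields $r_1>\mu$ and $r_n<\mu$. (Here I suppress the parabolic weights in the filtered degree; for the variations of Hodge structure in question these contribute uniformly across the chain, so the two displayed inequalities hold for the $r_j$ themselves, and verifying this is part of the reduction.)

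\emph{Counting and matching.} Fix a value $r$ and let $j_1<\dots<j_M$ ($M=m_r$) be the positions with $r_{j_i}=r$; these are isolated because consecutive values differ. The decisive observation is that, since downward steps have size exactly $2$, the path cannot cross $r$ downward in one step: any passage from a value $>r$ to a value $<r$ must land on $r$. Hence, between two consecutive visits to $r$, once the path departs upward it stays $>r$ and can only return via the step $r+2\to r$. Writing $Q_r$ for the number of steps $r\to r-2$, $P_r$ for the number of steps $r+2\to r$, and $X_r$ for the number of visits departing upward, one gets $m_r=X_r+Q_r+\mathbf1[r_n{=}r]$. The implication ``departs up $\Rightarrow$ next arrival is down'' injects the upward departures, other than a possible terminal one, into the downward arrivals other than the first, giving $X_r\le P_r-\mathbf1[a_1{=}\mathrm{down}]+\mathbf1[d_M{=}\mathrm{up}]$, where $a_1$ and $d_M$ denote the arrival of the first visit and the departure of the last. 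Finally $Q_r\le m_{r-2}$ and $P_r\le m_{r+2}$, since distinct steps $r\to r-2$ have distinct endpoints and distinct steps $r+2\to r$ have distinct starting points.

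\emph{Boundary terms.} Combining the three displays reduces everything to the inequality $\mathbf1[d_M{=}\mathrm{up}]+\mathbf1[r_n{=}r]\le\mathbf1[a_1{=}\mathrm{down}]$, after which $m_r\le P_r+Q_r\le m_{r-2}+m_{r+2}$. This is exactly where stability must re-enter. If the first visit is not a downward arrival, then either it is the initial position, so $r_1=r$, or it is reached from below, so $r_1<r$; in both cases $r_1>\mu$ gives $r>\mu$. Now if $r_n=r$ then $r<\mu$, so $a_1=\mathrm{down}$ and the surplus $\mathbf1[r_n{=}r]$ is absorbed; and if the last visit departs upward, the path ends strictly above $r$, so $r_n>r$, which with $r_n<\mu$ again forces $r<\mu$ and hence $a_1=\mathrm{down}$ (the alternatives would contradict $r_n<\mu<r_1$). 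In every case the desired three-term inequality follows.

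\emph{Main obstacle.} I expect the delicate part to be precisely this boundary analysis and, preceding it, the clean reduction of \emph{filtered} stability, with its parabolic weights, to the two bare inequalities $r_1>\mu$ and $r_n<\mu$ on the splitting numbers. The asymmetry driving the matching — the path may jump \emph{up} across $r$ but never \emph{down} — is what makes stability indispensable, since the step conditions alone do not suffice (for instance the admissible sequence $0,-2,0,-2,0$ already violates the three-term inequality, and is ruled out only by stability). Finally, the degenerate case $n=1$, a single line bundle, must be excluded: there the inequality genuinely fails, but such a bundle carries no nontrivial variation of Hodge structure, so it falls outside the hypotheses.
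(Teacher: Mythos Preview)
Your argument is correct and follows essentially the same route as the paper: both reduce to a lattice path $(j,r_j)$ with descents of size exactly $2$ and no flat steps, then match visits at height $r$ to visits at heights $r\pm2$, invoking stability only to handle the boundary term. The paper's matching is phrased via region labels $A$, $B$, $C$ and uses the single consequence $r_n<r_1$, whereas you track $P_r,Q_r,X_r$ explicitly and use the slightly sharper pair $r_1>\mu>r_n$; your flagged concerns about the parabolic contribution to the filtered degree and the degenerate case $n=1$ are legitimate and are glossed over in the paper as well.
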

\begin{proof}
  Stability forces all of the maps $\theta$ among the line bundles to be nonzero, save for $\theta|_{\cO(r_n)} = 0$, and all of the roots $r_j$ must have the same parity. Since $H^0(X,\cO(r)) = 0$ if $r < 0$ or $r = 2$, it follows that since $\theta \colon \cO(r_{j}) \to \cO(r_{j+1}+2)$ is nonzero, we must have $r_{j+1}+2-r_{j} \geq 0$ (there are no holomorphic modular forms of negative weight) and $r_{j+1}+2-r_{j} \neq 2$ (there are no holomorphic modular forms of weight $2$ and level one). Therefore, if we consider the graph $G$ obtained by joining the points $(j,r_j)$ by straight lines, then $G$ has the following properties:
  \begin{enumerate}
  \item it has no horizontal segments;
  \item it can drop at most by steps of $2$;
    \item it rises by even integers.
  \end{enumerate}
  
    We wish to show that if we intersect $G$ with the horizontal lines at height $r$, $r-2$ and $r+2$, then the number of points with integer coordinates on the line at height $r$ is bounded by the sums of the number of points on the lines at heights $r+2$ and $r-2$. To see that this is so, label the regions between intersection points with the line at height $r$ with either an $A$, $B$, or $C$. The $A$ and $B$ indicate that $G$ is either entirely above or entirely below the line at height $r$ in the given region. Since $G$ drops by at most two at each step, the only other possibility is that $G$ starts below the line at height $r$, but then rises up by an even amount without hitting the line at height $r$ at an integer lattice point. Label such regions by $C$. Notice that after a region labeled by $C$, the next time $G$ intersects the line at height $r$, it must do so at an integer lattice point from above, as $G$ decreases at each step only by $2$. 

    Consider walking along $G$ from left to right. At a point of intersection with the line at height $r$, if the region to the right is labelled $B$, we may pair the intersection point at height $r$ with the following intersection point that must be at height $r-2$, since the graph decreases exactly by $2$. If instead the region is labelled $A$, then we may travel along the region up until the next intersection point at height $r$. This next point is approached from above. We thus pair the point under consideration with the point at height $r+2$ to the left of the \emph{next} intersection point at height $r$. If the region is labelled $C$, we can in fact pair our point with the point to the right of it at height $r-2$, or the point to the left of the next intersection point, necessarily at height $r+2$.

    It remains to handle the rightmost intersection point. Suppose first that $r < r_1$. Then to the left of the leftmost intersection point at height $r$, the graph must drop down to meet that intersection point. That means that the line at height $r+2$ contributes one more intersection point to $m_{r+2}$ to the left of the leftmost intersection point at height $r$, and hence $m_r \leq m_{r-2}+m_{r+2}$.

    Finally, suppose instead that $r \geq r_1$.  Stability implies that $r_n < r_1$. Thus, $G$ must eventually drop below $r_1$ after the rightmost intersection point at height $r \geq r_1$. Hence the line at height $r-2$ will contribute an extra point of intersection, contributing $+1$ to $m_{r-2}$, and this again proves that $m_r \leq m_{r-2}+m_{r+2}$.\end{proof}

\subsection{Nilpotent Higgs bundles: a sketch of an idea}  As noted previously, one of the nice features of working with Higgs bundles is the confluence of geometric structures on their moduli space that can be used to study properties of Higgs bundles that remain invariant in a family.  With this in mind, one strategy for proving the three-term inequality for a larger class of filtered regular Higgs bundles is to use the differential and symplectic topology of the moduli space to reduce the problem to the complex variations of Hodge structure in Proposition \ref{PropMaxDecVar}.  The key fact here is that the complex variations of Hodge stucture are fixed points of a $\mathbf{C}^*$-action on the moduli space of stable filtered regular Higgs bundles.  This action, whose compact analogue was first exploited by Hitchin in \cite{Hitchin}, simply rescales the Higgs field:$$\theta\stackrel{\lambda}{\mapsto}\lambda\theta.$$ This action interacts with the structure of the moduli space via the \emph{Hitchin map} that sends each filtered Higgs bundle to the tuple of coefficients of the characteristic polynomial of $\theta$.  In this sense, the Hitchin map is a generalized Chevalley morphism for the Lie algebra of the structure group of the filtered bundles.  The target space of this map is an affine space and the fibres are called \emph{Hitchin fibres}.\footnote{As a side note, it is now known from the work of Laumon, Ng\^o, Chaudouard, and others (for example, \cite{ChaudouardLaumon,LaumonNgo,Ngo}) that the Hitchin fibration is closely related to the affine Springer fibration in geometric representation theory, which is yet another avenue along which Higgs bundles enter automorphic representation theory.}   The complex variations of Hodge structure have nilpotent $\theta$ (meaning as per usual that $\theta^k=0$ for some $k\geq1$), and so they lie in a special fibre of the Hitchin map, called the \emph{nilpotent cone} or \emph{core}, which we signify by $\mathcal N$.  Now, the particular variations considered in Proposition \ref{PropMaxDecVar} correspond in the Higgs bundle literature to fixed points of type $(1,...,1)$, where the $1$'s are simply recording the fact that each summand in the Hodge structure is of rank $1$.  In general, the type of a fixed point is some ordered partition of the total rank, $n$.   The various types of fixed points are organized as a poset within the cone according to the value of a real-valued functional defined on the moduli space.  Typically, this is functional is a nondegenerate function $f\geq0$, called the \emph{Morse function}, which in this case is some scalar multiple of the $L^2$-norm of $\theta$.  This norm descends to the moduli space from an infinite-dimensional affine space of smooth connections via the K\"ahler quotient construction \cite{BiswasGothenLogares} and is a moment map for the compact group $U(1)\hookrightarrow\mathbf{C}^*$, which has the same fixed points as the noncompact action.  In general, $\mathcal N$ is a highly singular variety with multiple irreducible components.  One expects that the type $(1,...,1)$ fixed points are local maxima of $f|_{\mathcal N}$.  In other words, to each irreducible component, there should be an ordered $n$-tuple of integers $m_j$, and vice-versa.  (Different orderings may be possible and correspond to distinct components.)  The downward gradient flow of $f$ from the type $(1,...,1)$ fixed points eventually captures all of $\mathcal N$. The flow trajectories are determined by the positive weight space of the normal bundle to $\mathcal N$, which in turn is determined by the positive eigenvalues of $\mbox{Hess}(f)$.  Some of these directions correspond to deformations of $\theta$ that leave the bundle $\bar E$ invariant, while others transform the holomorphic structure on $\bar E$ alone. This former type of deformation reorganizes the line bundles in $\bar E$ but preserves the numbers $m_j$.

The challenge here is to show that:\begin{enumerate}\item for any $(E,\theta)\in\mathcal N$, its splitting into line bundles is a permutation of those appearing in a connected component of the $(1,...,1)$ fixed point locus and that $(E,\phi)$ is connected to that locus via the Morse flow;\item all reorganizations of the line bundles coming from the $\theta$-deforming flow trajectories either continue to satisfy the three-term inequality (by virtue of having satisfied it for the type $(1,...,1)$ fixed points) or become unstable;\item a version of Morse-Bott or Bia{\l}ynicki-Birula theory, which have both been developed for classical moduli of Higgs bundles (for example, \cite{Wilkin,HauselRodriguez}) can be correctly adapted to the moduli space in this setting, in order to ensure that the gradient flow proceeds normally.\end{enumerate} 

Regarding the last point, these theories, which are usually deployed to detect the topology of a space, are sensitive to singularities and other defects that can interrupt flow lines.  Although the cone itself is singular, the smoothness of the total space of the moduli space is usually sufficient to guarantee sensible results.  Despite the technical wrinkles, these ideas suggest a potentially fruitful avenue to proving the three-term inequality for a large number of cases.  With $\mathcal N$ under control, one could then examine flow lines into the cone from the rest of the moduli space, potentially establishing the three-term inequality for less restrictive Higgs bundles and hence for further types of representations.

\bibliographystyle{plain}
\bibliography{refs}
\end{document}